\theoremstyle{plain}
\newtheorem{theorem}{Theorem}[section]
\newtheorem{lemma}{Lemma}[section]
\newtheorem{corollary}{Corollary}[section]
\newtheorem{remark}{Remark}[section]
\newtheorem{definition}{Definition}[section]
\newcommand{\hn}{\mathbb{H}^{N}}
\newcommand{\dv}{\: {\rm d}v_{\hn}}
\newcommand{\dr}{\: {\rm d}r}
\numberwithin{equation}{section} \allowdisplaybreaks
\begin{document}
	
	\title[Hardy-Rellich and Poincar\'e identities]{Hardy-Rellich and second order Poincar\'e identities\\ on the hyperbolic space via Bessel pairs} 

	\author[Elvise BERCHIO]{Elvise BERCHIO}
\address{\hbox{\parbox{5.7in}{\medskip\noindent{Dipartimento di Scienze Matematiche, \\
Politecnico di Torino,\\
       Corso Duca degli Abruzzi 24, 10129 Torino, Italy. \\[3pt]
       \em{E-mail address: }{\tt elvise.berchio@polito.it}}}}}

	\author[Debdip Ganguly]{Debdip Ganguly}
	\address{\hbox{\parbox{5.7in}{\medskip\noindent{Department of Mathematics,\\
					Indian Institute of Technology Delhi,\\
					IIT Campus, Hauz Khas, Delhi,\\
					New Delhi 110016, India. \\[3pt]
					\em{E-mail address: }{\tt 
						debdipmath@gmail.com}}}}}
					
	\author[Prasun Roychowdhury]{Prasun Roychowdhury}
	\address{\hbox{\parbox{5.7in}{\medskip\noindent{Department of Mathematics,\\
					Indian Institute of Science Education and Research,\\
					Dr. Homi Bhabha Road, Pashan,\\
					Pune 411008, India. \\[3pt]
					\em{E-mail address: }{\tt prasunroychowdhury1994@gmail.com}}}}}
	
	\subjclass[2010]{26D10, 46E35, 31C12, 35A23}
	\keywords{Higher order Poincar\'e inequality, Hardy-Rellich inequality, Hyperbolic Space, Bessel Pair, Heisenberg-Pauli-Weyl uncertainty principle}
	\date{\today}

	\maketitle
	
	\begin{abstract}
		We prove a family of Hardy-Rellich and Poincar\'e identities and inequalities on the hyperbolic space having, as particular cases, improved Hardy-Rellich, Rellich and second order Poincar\'e inequalities. All remainder terms provided considerably improve those already known in literature, and all identities hold with same constants for radial operators also. Furthermore, as applications of the main results, second order versions of the uncertainty principle on the hyperbolic space are derived.
	\end{abstract}
	

\section{Introduction}
Let $\hn$ with $N\geq 2$ denote the most important example of Cartan-Hadamard manifold, namely the hyperbolic space and let $\lambda_{1}(\hn)$ denote the bottom of the spectrum of $\Delta_{\hn}$ which is explicitly given by
\begin{equation}\label{poincare}
  \lambda_{1}(\hn)= \inf_{u \in C_{c}^{\infty}(\hn)  \setminus \{ 0 \}} \dfrac{\int_{\hn} |\nabla_{\hn} u|^2 \  \emph{\rm d}v_{\hn}}{\int_{\hn} u^2 \  \emph{\rm d}v_{\hn}}=\left(\frac{N-1}{2} \right)^2 \,.
  \end{equation}
 The present paper takes its origin from the following family of Hardy-Poincar\'e inequalities recently proved in \cite{BGGP}: for all $N-2 \leq  \lambda \leq  \lambda_{1}(\hn)$ and all $u \in C_{c}^{\infty} (\hn \setminus \{ x_0 \})$ there holds
\begin{equation}\label{improved-poinc-lambda}
\begin{aligned}
 &\int_{\hn}|\nabla_{\hn} u|^2 \emph{\rm d}v_{\hn} \geq \lambda \int_{\hn}  u^2 \emph{\rm d}v_{\hn}+ h_N^2(\lambda) \int_{\hn} \frac{u^2}{r^2} \emph{\rm d}v_{\hn} \\
 & +   \left[\left(\frac{N-2}{2} \right)^2-h_N^2(\lambda)\right] \int_{\hn}  \frac{u^2}{\sinh^2 r} \emph{\rm d}v_{\hn}+  \gamma_{N}(\lambda)\,h_N(\lambda) \int_{\hn} \frac{r \coth r - 1}{r^2}\, u^2\emph{\rm d}v_{\hn} \,
\end{aligned}
\end{equation}
where $\gamma_{N}(\lambda):=\sqrt{(N-1)^2-4\lambda}$, $h_N(\lambda):=\frac{\gamma_{N}(\lambda)+1}{2}$ and $r:={\rm d}(x, x_0)$ is the geodesic distance from a fixed pole $x_0 \in \hn$. We notice that the function $\frac{r \coth r - 1}{r^2}$ is positive while the map $[N-2,  \lambda_{1}(\hn)] \ni \lambda \mapsto h_N(\lambda)$ is decreasing. Furthermore, for $N\geq 3$, there holds $\frac{1}{4}\leq h_N^2(\lambda)\leq \left(\frac{N-2}{2} \right)^2$ and, for all $\lambda$, one locally recovers the optimal Hardy weight: $\left(\frac{N-2}{2} \right)^2 \frac{1}{r^2}$. Besides, denoted with $V_\lambda$ the positive potential at the r.h.s. of \eqref{improved-poinc-lambda}, the operator $ -\Delta_{\hn} -V_{\lambda}(r)$ is \emph{critical} in $\hn \setminus \{ x_0 \}$ in the sense that the inequality
$
\int_{\hn} |\nabla_{\hn} u|^2 \ \emph{d}v_{\hn} \geq \int_{\hn} V u^2 \ \emph{d}v_{\hn}$ is not valid for all $u \in C_{c}^{\infty} (\hn \setminus \{ x_0 \})$ if $V \gneqq V_{\lambda} $. 

The interest of \eqref{improved-poinc-lambda} relies on the fact that it provides in a single inequality, proved by means of a unified approach, an optimal improvement (in the sense of adding nonnegative terms in the right side of the inequality) of the Poincar\'e inequality \eqref{poincare} and an optimal improvement  of the Hardy inequality. Indeed, for $ \lambda =\lambda_{1}(\hn)$ ($\gamma_{N}=0$) inequality \eqref{improved-poinc-lambda} becomes the improved Poincar\'e inequality:
\begin{align}\label{poincareeq}
&\int_{\hn} |\nabla_{\hn} u|^2 \ {\rm d}v_{\hn}\geq  \left(\frac{N-1}{2} \right)^2 \int_{\hn} u^2 \ {\rm d}v_{\hn} \notag \\
&+ \frac{1}{4} \int_{\hn} \frac{u^2}{r^2} \ {\rm d}v_{\hn} + \frac{(N-1)(N-3)}{4} \int_{\hn} \frac{u^2}{\sinh^2 r}  \, {\rm d}v_{\hn}\,,
\end{align}

for all $u \in C_{c}^{\infty} (\hn \setminus \{ x_0 \})$ with $N\geq 2$. Instead, for $\lambda=N-2$ ($\gamma_{N}=N-3$) \eqref{improved-poinc-lambda} becomes the improved Hardy inequality:
\begin{align}\label{improved-hardy-eq}
&\int_{\hn} |\nabla_{\hn} u|^2 \ \emph{\rm d}v_{\hn}  \geq \left(\frac{N-2}{2} \right)^2 \int_{\hn} \frac{u^2}{r^2} \ \emph{\rm d}v_{\hn} \notag\\
 &+  (N-2) \int_{\hn} u^2 \ \emph{\rm d}v_{\hn}  + \frac{(N-2)(N-3)}{2} \int_{\hn}   \frac{r \coth r - 1}{r^2} \, u^2 \ \emph{\rm d}v_{\hn}\,,
\end{align}
for all $u \in C_{c}^{\infty} (\hn \setminus \{ x_0 \})$ with $N\geq 3$. As concerns inequality \eqref{poincareeq}, we recall that it has been shown first in \cite{AK} and then, with different methods, adapted to larger classes of manifolds in \cite{EDG} where criticality has also been shown. Very recently, a further development has been done in \cite{FLLM} where, by using the notion of Bessel pairs, it has been proved that a further positive term of the form $\int_{\hn}\frac{r}{\sinh^{N-1}r}\big|\nabla_{\hn}\big(u\frac{\sinh^{\frac{N-1}{2}}r }{r}\big)\big|^2  dv_{\hn}$ can be added at the r.h.s. of  \eqref{poincareeq} so that the inequality becomes an equality. Clearly, this is not in contrast with the criticality proved in \cite{EDG} since the added term is not of the form $V u^2$. We refer the interested reader to \cite{BAGG} for the $L^p$ version of \eqref{poincareeq}, and to \cite{BRM2} for remainder terms of \eqref{poincare} involving the Green's function of the Laplacian. 

Regarding \eqref{improved-hardy-eq}, it's worth recalling that generalizations to Riemannian manifolds of the classical euclidean Hardy inequality have been intensively pursued after the seminal work of Carron \cite{Carron}. In particular, on Cartan-Hadamard manifolds the optimal constant is known to be $\left(\frac{N-2}{2} \right)^2$ and improvements of the Hardy inequality have been given e.g., in \cite{Dambrosio,FLLM,Kombe1, Kombe2,AKR,YSK}. This is in contrast to what happens in the Euclidean setting where the operator $-\Delta_{\mathbb{R}^{N}} - \left(\frac{N-2}{2} \right)^2  \frac{1}{|x|^2} $ is known to be  \emph{critical} in $\mathbb{R}^{N} \setminus \{ 0 \}$ (see \cite{pinch}). In particular, in inequality \eqref{improved-hardy-eq} the effect of the curvature allows to provide a remainder term of $L^2$-type, therefore of the same kind of that given in the seminal paper by Brezis-Vazquez \cite{Brezis} for the Hardy inequality on euclidean \emph{bounded} domains. \par
\smallskip\par

The above mentioned results make it natural to investigate the existence of a family of inequalities extending \eqref{improved-poinc-lambda} to the second order, that is an inequality including either improvement of the second order Poincar\'e inequalities:
	\begin{align}\label{higher_order_poin}
	  \int_{\hn}(\Delta_{\hn} u)^2 {\rm d}v_{\hn}\geq \bigg(\frac{N-1}{2}\bigg)^{2(2-l)} \int_{\hn} |\nabla_{\hn}^l u|^2{\rm d}v_{\hn}\quad (l=0  \text{ or } l=1)
	\end{align}
	for all $u\in C_c^\infty(\hn)$ ($N\geq 2$), and improvement of the second order Hardy inequalities:
 \begin{align}\label{rellich-general}
	  \int_{\hn}(\Delta_{\hn} u)^2 {\rm d}v_{\hn}\geq \frac{N^2}{4}\,\bigg(\frac{N-4}{2}\bigg)^{2(1-l)} \int_{\hn} \frac{|\nabla_{\hn}^l u|^2}{r^{4-2l}}{\rm d}v_{\hn} \quad (l=0  \text{ or } l=1)
	\end{align}
 for all $u\in C_c^\infty(\hn)$ ($N\geq 5$), i.e. the Rellich inequality which comes for $l=0$ and the Hardy-Rellich inequality for $l=1$. We recall that inequalities \eqref{higher_order_poin} are known from \cite{ngo} and \cite{SK} with optimal constants, while improvements have been provided in \cite{EG}-\cite{EDG} and, for radial operators, in \cite{EGR}-\cite{prc-20}. Instead, inequalities \eqref{rellich-general} were firstly studied in \cite{Kombe1} and in \cite{YSK}, where the optimality of the constants was proved together with the existence of some remainder terms. More recently, a stronger version of \eqref{rellich-general}, only involving radial operators and still holding with same constants, has been obtained in \cite{VHN}. See also \cite{KR} for improved versions of \eqref{rellich-general} in the general framework of Finsler-Hadamard manifolds. 
 
 In the present paper we complete the picture of results in $\hn$ by proving a family of inequalities including either an improved version of \eqref{higher_order_poin} and an improved version of \eqref{rellich-general} when $l=1$, therefore extending \eqref{improved-poinc-lambda} to the second order, see Theorem  \ref{improved-hardy} below. Furthermore, in Theorem \ref{improved-hardy-radial}, we show that the obtained family of inequalities reads as a family of \emph{identities}  for radial operators (also for non radial functions) giving a more precise understanding of the remainder terms provided. A fine exploitation of these results also allows to obtain improved versions of \eqref{higher_order_poin} and of \eqref{rellich-general} for $l=0$ in such a way to exhaust the second order scenario, see Corollaries \ref{2} and \ref{4}. As far we are aware, all the improvements provided have a stronger positive impact, on the r.h.s. of \eqref{higher_order_poin} and of \eqref{rellich-general}, than those already known in literature, see Remark \ref{comparison} in the following.
 
 We notice that \eqref{improved-poinc-lambda} was proved by means of a unified approach based on criticality theory, well established for \emph{second} order operators only (see \cite{pinch}), together with the exploitation of a family of explicit radial solutions to the associated equations. Therefore, a similar approach seems not applicable in the higher order case. Here, drawing primary motivation from the seminal paper \cite{GM}, we extend \eqref{improved-poinc-lambda} to the second order by using the notion of  \emph{Bessel pair}. This notion has been very recently developed in \cite{FLLM} on Cartan-Hadamard manifolds to establish several interesting Hardy identities and inequalities which, in particular, generalise many well-known Hardy inequalities on Cartan-Hadamard manifolds. By combining some ideas from \cite{FLLM}-\cite{GM}, and through delicate computations with spherical harmonics, in the present article we develop the method of Bessel pairs to derive general abstract Rellich inequalities and identities on $\hn$ that we employ to prove our main results, i.e., Theorems \ref{improved-hardy-radial} and \ref{improved-hardy}. In this way, we get either Poincar\'e and Hardy-Rellich identities, and improved inequalities, by means of a unified proof where the key ingredient is the clever construction of a family of Bessel pairs, see \eqref{potential} in the following. Finally, as applications of the obtained inequalities, we derive quantitative versions of the second order Heisenberg-Pauli-Weyl uncertainty principle, see Section \ref{uncertainity}. As far as we know, the results provided represent the first examples of second order Heisenberg-Pauli-Weyl uncertainty principle in the Hyperbolic context.

  \medskip

The paper is organized as follows: in Section \ref{main} we introduce some of the notations and we state our main results, i.e. Poincar\'e and Hardy-Rellich identities and related improved inequalities; furthermore, in this section, we also state second order versions of the Heisenberg-Pauli-Weyl uncertainty principle. In Section \ref{bessel} we provide abstract Rellich identities and inequalities via Bessel pairs together with a related Heisenberg-Pauli-Weyl uncertainty principle. Section \ref{applications} is devoted to the proofs of the results stated in Section \ref{main} by exploiting the results stated in Section \ref{bessel}, while Section \ref{bessel proofs} contains the proofs of the results stated in Section \ref{bessel}. Finally, in the Appendix we present a family of improved Hardy-Poincar\'e identities which follows as a corollary from \cite[Theorem 3.2]{FLLM}, see Lemma \ref{th_Bessel_hardy} below, by exploiting the family of Bessel pairs introduced in Section \ref{applications}. In particular, these identities give a deeper understanding of \eqref{improved-poinc-lambda} and include \cite[Theorem~1.4]{FLLM} as a particular case.

\section{Main results}\label{main}

\subsection{Notations} From now onward, if nothing is specified, we will always assume $N\geq 2$.
It is well known that the $N$-dimensional hyperbolic space $\hn$ admits a polar coordinate decomposition structure. Namely, for $x\in\hn$ we can write $x=(r, \Theta)=(r, \theta_{1},\ldots, \theta_{N-1})\in(0,\infty)\times\mathbb{S}^{N-1}$, where $r$ denotes the geodesic distance between the point $x$ and a fixed pole $x_0$ in $\hn$ and $\mathbb{S}^{N-1}$ is the unit sphere in the $N$-dimensional euclidean space $\mathbb{R}^N$. Recall that the Riemannian Laplacian of a scalar function $u$ on $\hn$ is given by 
\begin{equation*}
	\Delta_{\hn} u (r, \Theta)  =
	\frac{1}{\sinh^2 r} \frac{\partial}{\partial r} \left[ (\sinh r)^{N-1} \frac{\partial u}{\partial r}(r, \Theta) \right] \\
	+ \frac{1}{\sinh^2 r} \Delta_{\mathbb{S}^{N-1}} u(r, \Theta),
\end{equation*}
where $\Delta_{\mathbb{S}^{N-1}}$ is the Riemannian Laplacian on the unit sphere $\mathbb{S}^{N-1}$. In particular,
the radial contribution of the Riemannian Laplacian, namely the operator involving only on $r$, $\Delta_{r,\hn} u$, reads as
\begin{equation*}
	\Delta_{r,\hn} u = \frac{1}{(\sinh r)^{N-1}} \frac{\partial}{\partial r} \left[ (\sinh r)^{N-1} \frac{\partial u}{\partial r}
	 \right] =  u^{\prime \prime} + (N-1) \coth r \,u^{\prime},
\end{equation*}
where from now on a prime will denote, for radial functions, derivative w.r.t .$r$. Also, let us recall the Gradient in terms of the polar coordinate decomposition is given by
\begin{equation*}
	\nabla_{\hn}u(r, \Theta)=\bigg(\frac{\partial u}{\partial r}(r, \Theta), \frac{1}{\sinh r}\nabla_{\mathbb{S}^{N-1}}u(r, \Theta)\bigg),
\end{equation*}
where $\nabla_{\mathbb{S}^{N-1}}$ denotes the Gradient on the unit sphere $\mathbb{S}^{N-1}$. Again, the radial contribution of the Gradient, $\nabla_{r,\hn}u$, is defined as 
\begin{equation*}
	\nabla_{r,\hn}u=\bigg(\frac{\partial u}{\partial r}, 0 \bigg).
\end{equation*}

\subsection{Hardy-Rellich and Poincar\'e identities and improved inequalities}
Our main result for radial operators reads as follows
\begin{theorem}\label{improved-hardy-radial}
For all $0\leq  \lambda \leq  \lambda_{1}(\hn)=\left(\frac{N-1}{2} \right)^2$ and all $u \in C_{c}^{\infty} (\hn \setminus \{ x_0 \})$ there holds
\begin{align*}
 &\int_{\hn}|\Delta_{r,\hn} u|^2\, \emph{d}v_{\hn} = \lambda \int_{\hn}  |\nabla_{r,\hn} u|^2\, \emph{d}v_{\hn} +h_N^2(\lambda) \int_{\hn} \frac{|\nabla_{r,\hn} u|^2}{r^2} \ \emph{d}v_{\hn} \\
 & +  \left[\left(\frac{N}{2} \right)^2 - h_N^2(\lambda)\right] \int_{\hn}  \frac{|\nabla_{r,\hn} u|^2}{\sinh^2 r} \ \emph{d}v_{\hn}+  \gamma_{N}(\lambda)\, h_N(\lambda)\int_{\hn} \frac{r \coth r - 1}{r^2}\, |\nabla_{r,\hn} u|^2  \ \emph{d}v_{\hn} \notag \\
 &+\int_{\hn}(\Psi_{\lambda}(r))^2\bigg|\nabla_{r,\hn}\bigg(\frac{u_r}{\Psi_{\lambda}(r)}\bigg)\bigg|^2\dv  \notag
\end{align*}
where $\gamma_{N}(\lambda):=\sqrt{(N-1)^2-4\lambda}$, $h_N(\lambda):=\frac{\gamma_{N}(\lambda)+1}{2}$ and $\Psi_{\lambda}(r) := r^{-\frac{N-2}{2}} \left(\frac{\sinh r}{r} \right)^{-\frac{N-1+\gamma_N(\lambda)}{2}}$. Furthermore, for $N\geq 5$ and $\lambda$ given, the constants $h_N^2(\lambda)$ and $ \left[\left(\frac{N}{2} \right)^2 - h_N^2(\lambda)\right]$ are jointly sharp in the sense that, fixed $h_N^2(\lambda)$, the inequality does not hold if we replace $ \left[\left(\frac{N}{2} \right)^2 - h_N^2(\lambda)\right]$ with a larger constant.
\end{theorem}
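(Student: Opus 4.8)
The plan is to reduce the second order identity to a first order one and then feed the result into the Bessel pair machinery. First I would set $w:=\partial_r u$, so that $\Delta_{r,\hn}u=\partial_r w+(N-1)\coth r\,w$ and $|\nabla_{r,\hn}u|^2=w^2$; since $u\in C_c^\infty(\hn\setminus\{x_0\})$, the function $w$ is smooth with compact support away from the pole. Expanding the square in $\int_{\hn}|\Delta_{r,\hn}u|^2\,\dv$, writing $2\coth r\,w\,\partial_r w=\coth r\,\partial_r(w^2)$, integrating by parts in $r$ on each geodesic sphere (all boundary terms vanish), and simplifying with the elementary identity $\coth^2 r\,\sinh^{N-1}r=(1+\sinh^2 r)\sinh^{N-3}r$, I expect the exact reduction
\[ \int_{\hn}|\Delta_{r,\hn}u|^2\,\dv=\int_{\hn}|\nabla_{r,\hn}w|^2\,\dv+(N-1)\int_{\hn}\frac{|\nabla_{r,\hn}u|^2}{\sinh^2 r}\,\dv. \]
Since only $r$-derivatives appear this is a one-dimensional computation carried out slice-wise, so $u$ need not be radial.

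Next I would apply the radial Hardy--Poincar\'e identity furnished by the Bessel pair toolbox --- Lemma \ref{th_Bessel_hardy} in the Appendix, equivalently the abstract Rellich-type results of Section \ref{bessel} --- to the function $w$, with the Bessel pair $(1,V_\lambda)$, where $V_\lambda$ is the potential on the right-hand side of \eqref{improved-poinc-lambda} (again a slice-wise one-dimensional identity, so $w$ need not be radial). Its virtual ground state, i.e. the positive solution on $(0,\infty)$ of $-\Delta_{r,\hn}\varphi=V_\lambda\varphi$, equivalently of $(\sinh^{N-1}r\,\varphi')'+\sinh^{N-1}r\,V_\lambda\varphi=0$, should be precisely $\Psi_\lambda(r)=r^{-\frac{N-2}{2}}\big(\tfrac{\sinh r}{r}\big)^{-\frac{N-1+\gamma_N(\lambda)}{2}}$ --- the family of explicit radial solutions already exploited in \cite{BGGP} --- and checking that $\Psi_\lambda$ solves this ODE is a direct computation. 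This produces
\[ \int_{\hn}|\nabla_{r,\hn}w|^2\,\dv=\int_{\hn}V_\lambda\,w^2\,\dv+\int_{\hn}(\Psi_\lambda(r))^2\,\Big|\nabla_{r,\hn}\Big(\tfrac{w}{\Psi_\lambda(r)}\Big)\Big|^2\,\dv. \]
Summing the two displays, inserting the explicit form of $V_\lambda$ and collecting the coefficient of $\int_{\hn}\frac{|\nabla_{r,\hn}u|^2}{\sinh^2 r}\,\dv$, namely $\big(\tfrac{N-2}{2}\big)^2-h_N^2(\lambda)+(N-1)=\big(\tfrac N2\big)^2-h_N^2(\lambda)$, and recalling $w=\partial_r u$, should give the asserted identity verbatim; note that for $0\le\lambda\le\lambda_1(\hn)$ all the resulting potential coefficients are nonnegative, which is exactly why this range is imposed.

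For the joint sharpness when $N\ge5$, I would localise around $x_0$. Fixing $\lambda$ and assuming $\big(\tfrac N2\big)^2-h_N^2(\lambda)$ could be enlarged to $\big(\tfrac N2\big)^2-h_N^2(\lambda)+\varepsilon$ with $\varepsilon>0$, testing with $u\in C_c^\infty(B_\rho(x_0)\setminus\{x_0\})$ and using $\tfrac1{\sinh^2 r}\ge\tfrac1{r^2}-C$ and $\tfrac{r\coth r-1}{r^2}\le C$ on $(0,\rho)$, together with nonnegativity of the remainder, I would obtain $\int_{\hn}|\Delta_{r,\hn}u|^2\,\dv\ge\big(\tfrac{N^2}{4}+\varepsilon\big)\int_{\hn}\tfrac{|\nabla_{r,\hn}u|^2}{r^2}\,\dv-C\int_{\hn}|\nabla_{r,\hn}u|^2\,\dv$; a scaling in normal coordinates at $x_0$, where $\Delta_{r,\hn}$ and $\nabla_{r,\hn}$ agree to leading order with the Euclidean radial operators, then removes the lower order term and contradicts the optimality, known for $N\ge5$, of the constant $\tfrac{N^2}{4}$ in \eqref{rellich-general} with $l=1$.

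The steps after the reduction are essentially bookkeeping; the crux --- and the main obstacle --- is the construction in the middle: producing the right Bessel pair $(1,V_\lambda)$ and identifying its virtual ground state $\Psi_\lambda$, equivalently recognising that $V_\lambda$ is precisely the critical potential from the discussion following \eqref{improved-poinc-lambda}, and then developing the abstract Bessel pair Rellich identities of Section \ref{bessel} (which is where the genuine work lies, since the second order case does not admit the criticality-theory approach of \cite{BGGP}). A secondary delicate point is the joint sharpness, which rests on the identity $h_N^2(\lambda)+\big[\big(\tfrac N2\big)^2-h_N^2(\lambda)\big]=\tfrac{N^2}{4}$ reconstructing exactly the sharp local Hardy--Rellich constant.
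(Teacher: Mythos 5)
Your argument is correct and, at its core, it is the paper's argument: the slice-wise integration by parts giving $\int_{\hn}|\Delta_{r,\hn}u|^2\dv=\int_{\hn}|\nabla_{r,\hn}u_r|^2\dv+(N-1)\int_{\hn}\frac{|\nabla_{r,\hn}u|^2}{\sinh^2 r}\dv$ is exactly identity \eqref{nu} with $V\equiv 1$, and the subsequent first-order step uses the same ground state $\Psi_\lambda$ and the same family of potentials as in \eqref{potential}. The one presentational difference is how the Bessel pair is set up: the paper stays within the convention of Lemma \ref{th_Bessel_hardy}, i.e.\ pairs $(r^{N-1}V,r^{N-1}W)$ with the \emph{Euclidean} weight, and therefore uses the modified potential $W_\lambda=V_\lambda+(N-1)\frac{\Psi_\lambda'}{\Psi_\lambda}\big(\coth r-\tfrac1r\big)$, the correction term of Lemma \ref{th_Bessel_hardy} cancelling this extra piece; you instead invoke the $\sinh^{N-1}r$-weighted ground-state representation with the BGGP potential $V_\lambda$ itself, which is equivalent precisely because $-\Delta_{r,\hn}\Psi_\lambda=V_\lambda\Psi_\lambda$ rewrites as $(r^{N-1}\Psi_\lambda')'+r^{N-1}W_\lambda\Psi_\lambda=0$. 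Be aware, though, that your phrase ``the Bessel pair $(1,V_\lambda)$'' is a misuse of the paper's definition: plugging $(V,W)=(1,V_\lambda)$ literally into Lemma \ref{th_Bessel_hardy} would require a positive solution of $(r^{N-1}f')'+r^{N-1}V_\lambda f=0$ (which is \emph{not} $\Psi_\lambda$) and would leave an extra $-(N-1)\int\frac{f'}{f}(\coth r-\tfrac1r)u_r^2\dv$ term; since you state the correct $\sinh$-weighted ODE and the correct resulting identity (a one-line substitution $u_r=\Psi_\lambda v$), this is only a terminological slip, not a gap. Your slice-wise treatment of non-radial $u$ replaces the spherical-harmonics bookkeeping of Corollary \ref{rop_th_Bessel_rellich} and is legitimate here because only $r$-derivatives occur; and your sharpness argument (localization at the pole plus absorption/scaling of the lower-order term, then the known optimality of $\tfrac{N^2}{4}$ in \eqref{rellich-general} with $l=1$) is a slightly more detailed version of the paper's own two-line remark, resting on the same cancellation $h_N^2(\lambda)+\big[(\tfrac N2)^2-h_N^2(\lambda)\big]=\tfrac{N^2}{4}$.
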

\begin{remark}
We remark that the the function $\frac{r \coth r - 1}{r^2}$ is positive, strictly decreasing and satisfies
$$\frac{r \coth r - 1}{r^2} \sim \frac{1}{3} \ \mbox{ as } \  r \rightarrow 0^+ \quad \mbox{ and }  \quad \frac{r \coth r - 1}{r^2} \sim \frac{1}{r} \  \mbox{ as }  \  r \rightarrow +\infty\,.$$
Furthermore, the map $[0,  \lambda_{1}(\hn)] \ni \lambda \mapsto h_N(\lambda)$ is decreasing and $\frac{1}{4}\leq h_N(\lambda)\leq \left(\frac{N}{2} \right)^2$.
\end{remark}

Furthemore, for non radial operators we obtain the second order analogous to \eqref{improved-poinc-lambda}:
\begin{theorem}\label{improved-hardy}
Let $N \geq 5.$ For all $0\leq  \lambda \leq  \lambda_{1}(\hn)=\left(\frac{N-1}{2} \right)^2$ and all $u \in C_{c}^{\infty} (\hn \setminus \{ x_0 \})$ there holds
\begin{align*}
 &\int_{\hn}|\Delta_{\hn} u|^2\, \emph{d}v_{\hn} \geq \lambda \int_{\hn}  |\nabla_{\hn} u|^2\, \emph{d}v_{\hn} +h_N^2(\lambda) \int_{\hn} \frac{|\nabla_{\hn} u|^2}{r^2} \ \emph{d}v_{\hn}  \\
 & +  \left[\left(\frac{N}{2} \right)^2 - h_N^2(\lambda)\right] \int_{\hn}  \frac{|\nabla_{\hn} u|^2}{\sinh^2 r} \ \emph{d}v_{\hn}+  \gamma_{N}(\lambda)\, h_N(\lambda)\int_{\hn} \frac{r \coth r - 1}{r^2}\, |\nabla_{\hn} u|^2  \ \emph{d}v_{\hn}\notag \\
 &+\int_{\hn}(\Psi_{\lambda}(r))^2\bigg|\nabla_{\hn}\bigg(\frac{u_r}{\Psi_{\lambda}(r)}\bigg)\bigg|^2\dv \notag
\end{align*}
where $\gamma_{N}(\lambda),h_N(\lambda)$ and $\Psi_{\lambda}(r)$ are as given in Theorem \ref{improved-hardy-radial}. Furthermore, for any given $\lambda$, the constants $h_N^2(\lambda)$ and $ \left[\left(\frac{N}{2} \right)^2 - h_N^2(\lambda)\right]$ are jointly sharp in the sense explained in Theorem \ref{improved-hardy-radial}.
\end{theorem}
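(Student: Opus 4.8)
The plan is to reduce Theorem~\ref{improved-hardy} to its radial counterpart, Theorem~\ref{improved-hardy-radial}, by a spherical harmonics decomposition, and to dispatch the higher angular modes by means of the abstract Rellich inequalities via Bessel pairs established in Section~\ref{bessel}. Write $u(r,\Theta)=\sum_{k\ge 0} u_k(r)\,Y_k(\Theta)$, where $\{Y_k\}$ is an $L^2(\mathbb{S}^{N-1})$ orthonormal basis of spherical harmonics with $-\Delta_{\mathbb{S}^{N-1}}Y_k=c_k Y_k$, $c_k=k(k+N-2)$. Using the polar expressions of $\Delta_{\hn}$ and $\nabla_{\hn}$ recalled in Section~\ref{main}, one expands $\int_{\hn}|\Delta_{\hn}u|^2\dv$, each weighted term $\int_{\hn}|\nabla_{\hn}u|^2\,W(r)\dv$, and the Bessel remainder $\int_{\hn}(\Psi_{\lambda})^2|\nabla_{\hn}(u_r/\Psi_{\lambda})|^2\dv$ mode by mode. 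By orthogonality of the $Y_k$ and of their spherical gradients, the inequality splits into a family of one-dimensional inequalities for the radial profiles $u_k$, each carrying the angular eigenvalue $c_k$ as a parameter.

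For $k=0$ the asserted inequality is exactly the identity of Theorem~\ref{improved-hardy-radial}. For $k\ge 1$ the idea is to apply, to each $u_k$, the abstract weighted Rellich inequality of Section~\ref{bessel} with the Bessel pair built from the potential $\Psi_{\lambda}$ of \eqref{potential}: the extra angular contributions of the form $c_k\int \frac{|u_k|^2}{\sinh^2 r}(\cdots)$, after integration by parts, add nonnegative pieces which must be shown to absorb the corresponding parts of the right-hand side. Concretely one has to verify that, for every $k\ge 1$, the combination of weights $\lambda$, $h_N^2(\lambda)/r^2$, $[(N/2)^2-h_N^2(\lambda)]/\sinh^2 r$ and $\gamma_N(\lambda)h_N(\lambda)\frac{r\coth r-1}{r^2}$ on the right, augmented by these angular terms, stays below the contribution of $\int_{\hn}(\Delta_{\hn}u)^2\dv$ at level $k$. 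This monotonicity-in-$k$ estimate is the heart of the argument, and is precisely where the hypothesis $N\ge 5$ enters: for small $N$ some of the coefficients produced by the angular part change sign, whereas for $N\ge 5$ (together with $0\le\lambda\le\lambda_1(\hn)$, so that $\tfrac14\le h_N^2(\lambda)\le (N/2)^2$) all remainder terms stay nonnegative.

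The main obstacle I expect is the bookkeeping of these spherical-harmonics computations. Since $\Delta_{\hn}(u_k Y_k)=(\Delta_{r,\hn}u_k)Y_k-\frac{c_k}{\sinh^2 r}u_k Y_k$, the square couples $\Delta_{r,\hn}u_k$ with $\frac{c_k}{\sinh^2 r}u_k$, generating mixed terms $\frac{c_k}{\sinh^2 r}u_k\,\Delta_{r,\hn}u_k$ whose sign is not a priori clear; these have to be integrated by parts carefully, exploiting $\frac{d}{dr}\frac{1}{\sinh^2 r}$ and the boundary behaviour of $u_k$ near $r=0$ and $r=\infty$ (harmless because $u\in C_c^\infty(\hn\setminus\{x_0\})$). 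The terms then have to be reorganised so as to reconstruct, for each $k$, the radial-type identity plus a manifestly nonnegative surplus; the Bessel-pair formalism of Section~\ref{bessel} is tailored to do exactly this, and the delicate point is to check that the chosen pair (the weight together with $\Psi_{\lambda}^2$) is admissible for all the relevant values of $c_k$.

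Finally, the joint sharpness of $h_N^2(\lambda)$ and $[(N/2)^2-h_N^2(\lambda)]$ is inherited from the radial case: restricting to radial $u$ turns the inequality into the identity of Theorem~\ref{improved-hardy-radial}, so any enlargement of $[(N/2)^2-h_N^2(\lambda)]$ with $h_N^2(\lambda)$ frozen would already fail there. This is seen by a standard truncation/regularisation argument, plugging $u=\Psi_{\lambda}\varphi_n$ with $\varphi_n$ a suitable family of cut-offs concentrating on the virtual extremal $\Psi_{\lambda}$ and letting $n\to\infty$, so that the Bessel remainder $\int_{\hn}(\Psi_{\lambda})^2|\nabla_{\hn}(u_r/\Psi_{\lambda})|^2\dv$ becomes negligible with respect to the other terms.
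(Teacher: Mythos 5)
Your overall strategy for the inequality itself coincides with the paper's: there, Theorem \ref{improved-hardy} is obtained by applying the abstract non-radial result, Theorem \ref{th_Bessel_rellich_non_rad}, to the Bessel pair $(r^{N-1},r^{N-1}W_{\lambda})$ with positive solution $\Psi_{\lambda}$, and the proof of that abstract theorem is precisely your spherical-harmonics reduction (the mode $n=0$ gives the radial identity of Theorem \ref{improved-hardy-radial}, the modes $n\geq 1$ produce an angular surplus that must be shown non-negative). The genuine gap is that you leave exactly this surplus estimate --- what you yourself call ``the heart of the argument'' --- unproven. It is not routine bookkeeping: in the paper it occupies most of the proof of Theorem \ref{th_Bessel_rellich_non_rad}, and requires applying the first-order Bessel identity (Lemma \ref{th_Bessel_hardy}) to the rescaled profiles $a_n/\sinh r$, several integrations by parts of the mixed terms $\lambda_n\,a_n\,\Delta_{r,\hn}a_n/\sinh^2 r$, the bound $\lambda_n\geq N-1$ for $n\geq 1$, and finally the pointwise condition \eqref{extra_condition_2}, which for $V\equiv 1$ reads $(N-5)/\sinh^2 r+(N-4)\geq 0$ and is exactly where $N\geq 5$ enters. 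You would also still have to carry out the concrete verification that $(r^{N-1},r^{N-1}W_{\lambda})$ is a Bessel pair with solution $\Psi_{\lambda}$ and that the resulting weight $W_{\lambda}$ reproduces the four terms in the statement; the paper does this by computing $\Psi_{\lambda}'$ and $\Psi_{\lambda}''$ explicitly, see \eqref{potential}. So the plan points in the right direction, but the decisive computations are asserted rather than established.

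Your sharpness argument is also not the paper's and does not work as sketched. The remainder term is $\int_{\hn}\Psi_{\lambda}^2\big|\nabla_{\hn}(u_r/\Psi_{\lambda})\big|^2\,{\rm d}v_{\hn}$, which involves $u_r$ and not $u$, so test functions of the form $u=\Psi_{\lambda}\varphi_n$ do not make it negligible; moreover, restricting to radial functions and invoking the identity of Theorem \ref{improved-hardy-radial} does not by itself exclude a larger constant in front of $\int_{\hn}|\nabla_{\hn}u|^2/\sinh^2 r\,{\rm d}v_{\hn}$ --- one would have to show that the infimum of the ratio of the remainder to that term is zero. The paper argues differently and locally at the pole: since $\sinh r\sim r$ as $r\to 0^+$, the two weighted terms combine asymptotically into $\frac{N^2}{4}\int_{\hn}|\nabla_{\hn}u|^2/r^2\,{\rm d}v_{\hn}$, so, with $h_N^2(\lambda)$ fixed, any larger constant in front of $1/\sinh^2 r$ would contradict the known optimality of $N^2/4$ in the Hardy--Rellich inequality \eqref{rellich-general} with $l=1$; the relevant test functions concentrate at $r=0$, not along the profile $\Psi_{\lambda}$.
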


 We notice that the dimension restriction $N\geq 5$ in Theorem \ref{improved-hardy} comes from assumption \eqref{extra_condition_2} in Theorem \ref{th_Bessel_rellich_non_rad} below where we state our abstract Rellich inequalities, see also Remark \ref{restriction} for some comments about this assumption that naturally comes when passing from the radial to the non radial framework. Theorems \ref{improved-hardy-radial} and \ref{improved-hardy} yield a number of improved Poincar\'e and Hardy-Rellich inequalities that we state here below; a comparison with previous results is provided in Remark \ref{comparison}. More precisely, for $\lambda=0$ we readily got the following improved Hardy-Rellich identity and inequality:

\begin{corollary}\label{1}
For all $u \in C_{c}^{\infty} (\hn \setminus \{ x_0 \})$ there holds
\begin{align*}
 \int_{\hn}|\Delta_{r,\hn} u|^2\, \emph{d}v_{\hn} &= \left(\frac{N}{2} \right)^2 \int_{\hn} \frac{|\nabla_{r,\hn} u|^2}{r^2} \ \emph{d}v_{\hn}\\
 & +  \frac{N(N-1)}{2}\int_{\hn} \frac{r \coth r - 1}{r^2}\, |\nabla_{r,\hn} u|^2  \ \emph{d}v_{\hn} \\
 &+\int_{\hn} \frac{r^{N}}{(\sinh r)^{2(N-1)} }\bigg|\nabla_{r,\hn}\bigg(\frac{(\sinh r)^{N-1} \, u_r}{r^{\frac{N}{2}}}\bigg)\bigg|^2\dv\,.
\end{align*}
 Moreover, if $N \geq 5$, for all $u \in C_{c}^{\infty} (\hn \setminus \{ x_0 \})$ there holds
\begin{align*}
 \int_{\hn}|\Delta_{\hn} u|^2\, \emph{d}v_{\hn} &\geq \left(\frac{N}{2} \right)^2 \int_{\hn} \frac{|\nabla_{\hn} u|^2}{r^2} \ \emph{d}v_{\hn}\\
 & +  \frac{N(N-1)}{2}\int_{\hn} \frac{r \coth r - 1}{r^2}\, |\nabla_{\hn} u|^2  \ \emph{d}v_{\hn} \\
&+\int_{\hn} \frac{r^{N}}{(\sinh r)^{2(N-1)} }\bigg|\nabla_{\hn}\bigg(\frac{(\sinh r)^{N-1} \, u_r}{r^{\frac{N}{2}}}\bigg)\bigg|^2\dv\,,
\end{align*}
 and the constant $ \left(\frac{N}{2} \right)^2$ appearing in the L.H.S of both equations is the sharp constant. 
\end{corollary}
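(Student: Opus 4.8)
The plan is to read off Corollary \ref{1} as the special case $\lambda=0$ of Theorems \ref{improved-hardy-radial} and \ref{improved-hardy}, and then to settle the sharpness of the leading constant.

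First I would evaluate the parameters at $\lambda=0$. Since $\gamma_N(0)=\sqrt{(N-1)^2}=N-1$, one gets $h_N(0)=\frac{(N-1)+1}{2}=\frac N2$, so that $h_N^2(0)=\big(\frac N2\big)^2$, the coefficient $\big(\frac N2\big)^2-h_N^2(0)$ of the term $\int\frac{|\nabla u|^2}{\sinh^2 r}\,\dv$ vanishes, and $\gamma_N(0)\,h_N(0)=\frac{N(N-1)}{2}$. Moreover
\[
\Psi_0(r)=r^{-\frac{N-2}{2}}\Big(\tfrac{\sinh r}{r}\Big)^{-(N-1)},\qquad
\Psi_0^2(r)=\frac{r^{N}}{(\sinh r)^{2(N-1)}},\qquad
\frac{u_r}{\Psi_0(r)}=\frac{(\sinh r)^{N-1}\,u_r}{r^{N/2}}.
\]
Plugging these into the identity of Theorem \ref{improved-hardy-radial} gives verbatim the claimed radial identity, and plugging them into the inequality of Theorem \ref{improved-hardy} (valid for $N\geq 5$) gives the claimed non-radial inequality; note that $\lambda=0$ belongs to the admissible range $[0,\lambda_1(\hn)]$, so no extra hypothesis is needed beyond $N\geq 5$ in the non-radial case.

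It remains to show that the Hardy-Rellich constant $\big(\frac N2\big)^2$ is optimal. Since the curvature term $\int\frac{r\coth r-1}{r^2}\,|\nabla u|^2\,\dv$ is positive and the final remainder term is a square, discarding both in the non-radial inequality yields $\int_{\hn}|\Delta_{\hn}u|^2\,\dv\ge\big(\frac N2\big)^2\int_{\hn}\frac{|\nabla_{\hn}u|^2}{r^2}\,\dv$, which is exactly \eqref{rellich-general} for $l=1$; the radial identity gives the same inequality for radial operators. The constant $\frac{N^2}{4}$ there is known to be sharp (see \cite{YSK}, \cite{Kombe1}), and one sees this directly by testing with radial functions concentrating at the pole $x_0$: there $\hn$ is asymptotically Euclidean, the curvature term is of lower order, and the quotient $\int|\Delta u|^2\,\dv\big/\int\frac{|\nabla u|^2}{r^2}\,\dv$ tends to the Euclidean Hardy-Rellich constant $\frac{N^2}{4}$ (finite and positive precisely for $N\ge 5$). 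Replacing $\big(\frac N2\big)^2$ by a larger constant would contradict this; in the radial identity it would moreover force the fixed nonnegative remainder to become negative for some $u$. Thus $\big(\frac N2\big)^2$ cannot be improved. The proof is essentially bookkeeping: the only points requiring attention are the algebraic simplification of $\Psi_0$ and of the remainder integrand and the correct reference for the optimality of the Hardy-Rellich constant — no genuinely new difficulty arises, since all the analytic content already resides in Theorems \ref{improved-hardy-radial}--\ref{improved-hardy}.
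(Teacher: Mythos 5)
Your proposal is correct and follows essentially the same route as the paper: Corollary \ref{1} is obtained by setting $\lambda=0$ in Theorems \ref{improved-hardy-radial} and \ref{improved-hardy} (so that $\gamma_N(0)=N-1$, $h_N(0)=\frac N2$, the $\sinh^{-2}r$ term drops out, and $\Psi_0^2(r)=\frac{r^N}{(\sinh r)^{2(N-1)}}$), with sharpness of $\big(\frac N2\big)^2$ deduced, as in the paper, from the known optimality of the constant in \eqref{rellich-general} for $l=1$ after discarding the nonnegative remainder terms.
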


For $\lambda=\lambda_{1}(\hn)$ we got an improvement of the second order Poincar\'e identity \eqref{higher_order_poin} with $l=0$, and the related inequality:

\begin{corollary}\label{3}
	For all $u\in C_c^\infty(\hn\setminus\{x_0\})$ there holds
	\begin{align*}
	\int_{\hn}|\Delta_{r,\hn} u|^2\dv&  =  \left(\frac{N-1}{2} \right)^2 \int_{\hn} |\nabla_{r,\hn}u|^2 \, \dv\\
		&+  \frac{1}{4} \int_{\hn} \frac{|\nabla_{r,\hn}u|^2}{r^2} \, \dv + \frac{N^2-1}{4} \int_{\hn} \frac{|\nabla_{\hn}u|^2}{\sinh^2 r} \, \dv \\
		&+\int_{\hn}\frac{r}{(\sinh r)^{N-1}}\bigg|\nabla_{r,\hn}\bigg(\frac{(\sinh r)^{\frac{N-1}{2}}u_r}{r^{\frac{1}{2}}}\bigg)\bigg|^2\dv.\notag
	\end{align*}
 Moreover, if $N \geq 5$, for all $u\in C_c^\infty(\hn\setminus\{x_0\})$ there holds
	\begin{align*}
		\int_{\hn}|\Delta_{\hn} u|^2\dv&  \geq  \left(\frac{N-1}{2} \right)^2 \int_{\hn} |\nabla_{\hn}u|^2 \, \dv\\
		&+  \frac{1}{4} \int_{\hn} \frac{|\nabla_{\hn}u|^2}{r^2} \, \dv + \frac{N^2-1}{4} \int_{\hn} \frac{|\nabla_{\hn}u|^2}{\sinh^2 r} \, \dv \\
		& + \, \int_{\hn}\frac{r}{(\sinh r)^{N-1}}\bigg|\nabla_{\hn}\bigg(\frac{(\sinh r)^{\frac{N-1}{2}}u_r}{r^{\frac{1}{2}}}\bigg)\bigg|^2\dv.\notag
	\end{align*}
	The constant $ \left(\frac{N-1}{2} \right)^2$ appearing in the L.H.S of both equations is the sharp constant. Moreover, for $N \geq 5,$ the constants $\frac{1}{4}$ and $\frac{N^2-1}{4}$ are jointly sharp in the sense explained in Theorem \ref{improved-hardy-radial}. 
\end{corollary}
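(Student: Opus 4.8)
The plan is to derive Corollary \ref{3} by reading Theorems \ref{improved-hardy-radial} and \ref{improved-hardy} at the endpoint value $\lambda=\lambda_{1}(\hn)=\left(\frac{N-1}{2}\right)^{2}$, at which $\gamma_{N}(\lambda)=\sqrt{(N-1)^{2}-4\lambda}=0$ and hence $h_{N}(\lambda)=\frac{\gamma_{N}(\lambda)+1}{2}=\frac12$. First I would record the ensuing simplifications: $h_{N}^{2}(\lambda)=\frac14$, so the coefficient of $\int_{\hn}\frac{|\nabla_{r,\hn}u|^{2}}{r^{2}}\dv$ is $\frac14$; the coefficient of $\int_{\hn}\frac{|\nabla_{r,\hn}u|^{2}}{\sinh^{2}r}\dv$ is $\left(\frac N2\right)^{2}-h_{N}^{2}(\lambda)=\frac{N^{2}-1}{4}$; and the term $\gamma_{N}(\lambda)\,h_{N}(\lambda)\int_{\hn}\frac{r\coth r-1}{r^{2}}|\nabla_{r,\hn}u|^{2}\dv$ vanishes identically because $\gamma_{N}(\lambda)=0$. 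Next I would evaluate the weight $\Psi_{\lambda}$: since $\frac{N-1+\gamma_{N}(\lambda)}{2}=\frac{N-1}{2}$,
\[
\Psi_{\lambda}(r)=r^{-\frac{N-2}{2}}\left(\frac{\sinh r}{r}\right)^{-\frac{N-1}{2}}=\frac{r^{1/2}}{(\sinh r)^{(N-1)/2}},
\]
so that $(\Psi_{\lambda}(r))^{2}=\frac{r}{(\sinh r)^{N-1}}$ and $\frac{u_r}{\Psi_{\lambda}(r)}=\frac{(\sinh r)^{(N-1)/2}u_r}{r^{1/2}}$. Substituting these into the last term of Theorems \ref{improved-hardy-radial} and \ref{improved-hardy} reproduces verbatim the remainder $\int_{\hn}\frac{r}{(\sinh r)^{N-1}}\big|\nabla_{r,\hn}\big(\frac{(\sinh r)^{(N-1)/2}u_r}{r^{1/2}}\big)\big|^{2}\dv$ (respectively its non-radial analogue). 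This already gives both displays of the corollary: the identity for the radial operator from Theorem \ref{improved-hardy-radial} (valid for all $N\geq 2$), and the inequality from Theorem \ref{improved-hardy}, which is why the second display carries the restriction $N\geq 5$.

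It then remains to settle the two optimality assertions. The joint sharpness of the constants $\frac14$ and $\frac{N^{2}-1}{4}$ for $N\geq 5$ is immediate from the joint-sharpness statements of Theorems \ref{improved-hardy-radial} and \ref{improved-hardy} read at $\lambda=\lambda_{1}(\hn)$, where $h_{N}^{2}(\lambda)=\frac14$ and $\left(\frac N2\right)^{2}-h_{N}^{2}(\lambda)=\frac{N^{2}-1}{4}$. The optimality of the leading constant $\left(\frac{N-1}{2}\right)^{2}$ — i.e. that it cannot be replaced by a larger one even after discarding \emph{all} the nonnegative remainder terms — is not covered by those sharpness statements, so I would supply a short independent argument via spectral calculus on $\hn$: setting $v:=(-\Delta_{\hn})^{1/2}u$ one has $\int_{\hn}|\Delta_{\hn}u|^{2}\dv=\int_{\hn}|\nabla_{\hn}v|^{2}\dv\geq\lambda_{1}(\hn)\int_{\hn}v^{2}\dv=\lambda_{1}(\hn)\int_{\hn}|\nabla_{\hn}u|^{2}\dv$, and this is sharp since $\lambda_{1}(\hn)$ is, by \eqref{poincare}, the best constant in the Poincar\'e inequality applied to $v$, with functions of the form $(-\Delta_{\hn})^{1/2}u$, $u\in C_c^\infty(\hn\setminus\{x_0\})$, dense in the relevant energy space when $N\geq 5$; equivalently, this is exactly the known optimality of \eqref{higher_order_poin} for $l=1$.

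I do not anticipate a genuine obstacle: Corollary \ref{3} is essentially a specialization of the two main theorems, and the only points requiring care are the bookkeeping in the evaluation of $\Psi_{\lambda}$, $h_{N}$ and $\gamma_{N}$ at $\lambda=\lambda_{1}(\hn)$ — in particular checking that the $\frac{r\coth r-1}{r^{2}}$ term really does drop out — and the brief spectral argument above needed to certify the sharpness of $\left(\frac{N-1}{2}\right)^{2}$, which does not follow from Theorems \ref{improved-hardy-radial}–\ref{improved-hardy} but only from \eqref{poincare}.
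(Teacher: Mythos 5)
Your proposal is correct and follows essentially the same route as the paper: Corollary \ref{3} is obtained precisely by evaluating Theorems \ref{improved-hardy-radial} and \ref{improved-hardy} at $\lambda=\lambda_{1}(\hn)$, where $\gamma_{N}(\lambda)=0$, $h_{N}(\lambda)=\tfrac12$ and $\Psi_{\lambda}(r)=r^{1/2}(\sinh r)^{-(N-1)/2}$, which is exactly your bookkeeping. The only (harmless) difference is in the sharpness of $\left(\frac{N-1}{2}\right)^{2}$: the paper relies on the known optimality of \eqref{higher_order_poin} for $l=1$ from the cited literature, while you sketch a short spectral-calculus argument, which serves the same purpose.
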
 

By combining Corollary \ref{1} with \cite[Corollary 3.2]{FLLM} we also get an improved Rellich inequality: 

\begin{corollary}\label{2} For all $u \in C_{c}^{\infty} (\hn \setminus \{ x_0 \})$ there holds
\begin{align*}
 \int_{\hn}|\Delta_{r,\hn} u|^2\, \emph{d}v_{\hn} &=  \frac{N^2}{4}\,\bigg(\frac{N-4}{2}\bigg)^{2} \int_{\hn} \frac{u^2}{r^4} \ \emph{d}v_{\hn}\\
 & +  \frac{N^2(N-4)(N-1)}{8}\int_{\hn} \frac{r \coth r - 1}{r^4}\, u^2  \ \emph{d}v_{\hn}\\
  & +\frac{N(N-1)}{2}\int_{\hn} \frac{r \coth r - 1}{r^2}\, |\nabla_{r,\hn} u|^2  \ \emph{d}v_{\hn} \\
 & +\frac{N^2}{4}\, \int_{\hn} \frac{1}{r^{N-2} }\bigg|\nabla_{r,\hn}\bigg(r^{\frac{N-4}{2}} u \bigg)\bigg|^2\dv \\
 &+\int_{\hn} \frac{r^{N}}{(\sinh r)^{2(N-1)} }\bigg|\nabla_{r,\hn}\bigg(\frac{(\sinh r)^{N-1} \, u_r}{r^{\frac{N}{2}}}\bigg)\bigg|^2\dv\,.
\end{align*}

Moreover, if $N \geq 5,$ for all $u \in C_{c}^{\infty} (\hn \setminus \{ x_0 \})$ there holds
\begin{align*}
 \int_{\hn}|\Delta_{\hn} u|^2\, \emph{d}v_{\hn} &\geq  \frac{N^2}{4}\,\bigg(\frac{N-4}{2}\bigg)^{2} \int_{\hn} \frac{u^2}{r^4} \ \emph{d}v_{\hn}\\
 & +  \frac{N^2(N-4)(N-1)}{8}\int_{\hn} \frac{r \coth r - 1}{r^4}\, u^2  \ \emph{d}v_{\hn}\\
 & +\frac{N(N-1)}{2}\int_{\hn} \frac{r \coth r - 1}{r^2}\, |\nabla_{\hn} u|^2  \ \emph{d}v_{\hn} \\
 & +\frac{N^2}{4}\, \int_{\hn} \frac{1}{r^{N-2} }\bigg|\nabla_{\hn}\bigg(r^{\frac{N-4}{2}} u \bigg)\bigg|^2\dv \\
 &+\int_{\hn} \frac{r^{N}}{(\sinh r)^{2(N-1)} }\bigg|\nabla_{\hn}\bigg(\frac{(\sinh r)^{N-1} \, u_r}{r^{\frac{N}{2}}}\bigg)\bigg|^2\dv\,,
\end{align*}
and the constant $\frac{N^2}{4}\,\left(\frac{N-4}{2}\right)^{2}$ appearing in the L.H.S of both equations is the sharp constant. 
\end{corollary}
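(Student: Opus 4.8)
The plan is to obtain Corollary \ref{2} from Corollary \ref{1} by replacing the single first–order weighted term $\int_{\hn}\frac{|\nabla_{r,\hn} u|^2}{r^2}\dv$ by a \emph{first–order} Hardy identity taken from \cite[Corollary 3.2]{FLLM}. Recall that Corollary \ref{1} already writes $\int_{\hn}|\Delta_{r,\hn} u|^2\dv$ (respectively $\int_{\hn}|\Delta_{\hn} u|^2\dv$, with the inequality sign, when $N\geq 5$) as $\big(\frac N2\big)^2\int_{\hn}\frac{|\nabla_{r,\hn} u|^2}{r^2}\dv$, plus the positive cross term $\frac{N(N-1)}{2}\int_{\hn}\frac{r\coth r-1}{r^2}|\nabla_{r,\hn} u|^2\dv$, plus the nonnegative gradient remainder $\int_{\hn}\frac{r^{N}}{(\sinh r)^{2(N-1)}}\big|\nabla_{r,\hn}\big(\frac{(\sinh r)^{N-1}u_r}{r^{N/2}}\big)\big|^2\dv$. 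The last two already appear verbatim in Corollary \ref{2}, so the whole matter reduces to expanding $\int_{\hn}\frac{|\nabla_{r,\hn} u|^2}{r^2}\dv$.

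For this I would apply \cite[Corollary 3.2]{FLLM} with the radial weight $W(r)=r^{-2}$, equivalently with auxiliary profile $\varphi(r)=r^{-\frac{N-4}{2}}$. A direct check gives $\big((\sinh r)^{N-1}W\varphi'\big)'=-(\sinh r)^{N-1}V\varphi$ with $V(r)=\frac{N-4}{2}\big(\frac{(N-1)\coth r}{r^{3}}-\frac{N+2}{2r^{4}}\big)$, and one verifies that this $V$ equals $\big(\frac{N-4}{2}\big)^2\frac{1}{r^{4}}+\frac{(N-4)(N-1)}{2}\,\frac{r\coth r-1}{r^{4}}$; hence $(r^{-2},V)$ is an admissible Bessel pair and \cite[Corollary 3.2]{FLLM} yields
\begin{align*}
\int_{\hn}\frac{|\nabla_{r,\hn} u|^2}{r^2}\dv
&=\Big(\tfrac{N-4}{2}\Big)^{2}\int_{\hn}\frac{u^2}{r^{4}}\dv
+\frac{(N-4)(N-1)}{2}\int_{\hn}\frac{r\coth r-1}{r^{4}}\,u^2\dv\\
&\quad+\int_{\hn}\frac{1}{r^{N-2}}\Big|\nabla_{r,\hn}\big(r^{\frac{N-4}{2}}u\big)\Big|^{2}\dv,
\end{align*}
and the same identity holds for non–radial $u$ with $\nabla_{\hn}$ everywhere in place of $\nabla_{r,\hn}$, since the spherical Dirichlet energy is untouched by the radial profile $\varphi$. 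Substituting this into the two formulas of Corollary \ref{1} and multiplying by $\big(\frac N2\big)^2$ produces at once the radial identity and, for $N\geq 5$, the non–radial inequality of Corollary \ref{2}; the only inequality sign in the non–radial case is the one already present in Corollary \ref{1}.

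It remains to argue that $\frac{N^2}{4}\big(\frac{N-4}{2}\big)^2$ cannot be enlarged. Since $r\coth r\geq 1$ on $(0,\infty)$ the two cross terms are nonnegative and the two gradient remainders are manifestly nonnegative, so the stated inequality implies a fortiori the classical Rellich inequality \eqref{rellich-general} with $l=0$, whose constant is known to be optimal (see \cite{Kombe1,YSK}); hence so is ours. I do not expect any genuine obstacle: the only non–mechanical work is the bookkeeping in the second paragraph, namely checking that $\varphi(r)=r^{-(N-4)/2}$ is an admissible profile on $\hn\setminus\{x_0\}$ (positive, with vanishing boundary contributions in the integration by parts underlying \cite[Corollary 3.2]{FLLM} for $u\in C_c^\infty(\hn\setminus\{x_0\})$) and that the potential $V$ reassembles exactly into the two weighted $L^2$ terms displayed above. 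Note that this step imposes no dimension restriction, so the radial identity holds for every $N\geq 2$, the hypothesis $N\geq 5$ being needed only to import the non–radial inequality from Corollary \ref{1}.
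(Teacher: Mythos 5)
Your proposal is correct and follows essentially the same route as the paper: substitute the first-order Hardy identity of \cite[Corollary 3.2]{FLLM} (with the Bessel pair $(r^{N-3},(\frac{N-4}{2})^2 r^{N-5})$, i.e.\ profile $r^{-\frac{N-4}{2}}$) into the term $\int_{\hn}\frac{|\nabla u|^2}{r^2}\dv$ of Corollary \ref{1}, for both the radial identity and the non-radial inequality. Your extra verification of the Bessel-pair ODE and your sharpness argument via the classical Rellich constant are consistent with what the paper asserts.
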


Instead, by combining Corollary \ref{3} with \cite[Theorem 1.4 and Corollary 3.2]{FLLM}, we improve \eqref{higher_order_poin} with $l=0$, i.e. we complete the second order scenario about Poincar\'e identities and inequalities :

\begin{corollary}\label{4}
	For all $u\in C_c^\infty(\hn\setminus\{x_0\})$ there holds 
		
	\begin{align*}
	\int_{\hn}|\Delta_{r,\hn} u|^2\dv&  =  \left(\frac{N-1}{2} \right)^4 \int_{\hn} u^2 \dv \\
	&+\left(\frac{N-1}{4} \right)^2 \int_{\hn} \frac{u^2}{r^2} \dv+  \frac{(N-1)^3(N-3)}{16} \int_{\hn} \frac{u^2}{\sinh^2 r}  \dv\\
	&+  \frac{1}{4} \int_{\hn} \frac{|\nabla_{r,\hn}u|^2}{r^2} \, \dv+ \frac{N^2-1}{4} \int_{\hn} \frac{|\nabla_{r,\hn}u|^2}{\sinh^2 r} \, \dv \\
		&+\left[\left(\frac{N-1}{2} \right)^2+1\right]\int_{\hn}\frac{r}{(\sinh r)^{N-1}}\bigg|\nabla_{r,\hn}\bigg(\frac{(\sinh r)^{\frac{N-1}{2}}u_r}{r^{\frac{1}{2}}}\bigg)\bigg|^2\dv.\notag
	\end{align*}
	
Moreover, if $N \geq 5$, for all $u\in C_c^\infty(\hn\setminus\{x_0\})$ there holds
	\begin{align*}
		\int_{\hn}|\Delta_{\hn} u|^2\dv&  \geq  \left(\frac{N-1}{2} \right)^4 \int_{\hn} u^2 \dv \\
	&+\left(\frac{N-1}{4} \right)^2 \int_{\hn} \frac{u^2}{r^2} \dv+  \frac{(N-1)^3(N-3)}{16} \int_{\hn} \frac{u^2}{\sinh^2 r}  \dv\\
	&+  \frac{1}{4} \int_{\hn} \frac{|\nabla_{\hn}u|^2}{r^2} \, \dv+ \frac{N^2-1}{4} \int_{\hn} \frac{|\nabla_{\hn}u|^2}{\sinh^2 r} \, \dv \\
		&+\left[\left(\frac{N-1}{2} \right)^2+1\right]\int_{\hn}\frac{r}{(\sinh r)^{N-1}}\bigg|\nabla_{\hn}\bigg(\frac{(\sinh r)^{\frac{N-1}{2}}u_r}{r^{\frac{1}{2}}}\bigg)\bigg|^2\dv.\notag
	\end{align*}
	The constant $ \left(\frac{N-1}{2} \right)^4$ appearing in the L.H.S of both equations is the sharp constant. Moreover, for $N\geq 5$, the constants $\frac{1}{4}$ and $\frac{N^2-1}{4}$ in both equations are jointly sharp in the sense explained in Theorem \ref{improved-hardy-radial}. 
\end{corollary}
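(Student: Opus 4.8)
The plan is to deduce the statement from Corollary \ref{3} by processing only its leading term $\left(\frac{N-1}{2}\right)^2\int_{\hn}|\nabla_{r,\hn}u|^2\,\dv$ (respectively $\left(\frac{N-1}{2}\right)^2\int_{\hn}|\nabla_{\hn}u|^2\,\dv$ in the non radial case), exactly in the spirit of the passage from Corollary \ref{1} to Corollary \ref{2}; the three remaining terms of Corollary \ref{3} are kept verbatim, which is why they reappear unchanged in the statement. To rewrite the leading term I would invoke the first order improved Poincar\'e identity of \cite[Theorem 1.4]{FLLM},
\begin{align*}
\int_{\hn}|\nabla_{\hn}u|^2\,\dv &= \left(\tfrac{N-1}{2}\right)^2\int_{\hn}u^2\,\dv + \tfrac14\int_{\hn}\tfrac{u^2}{r^2}\,\dv + \tfrac{(N-1)(N-3)}{4}\int_{\hn}\tfrac{u^2}{\sinh^2 r}\,\dv\\
&\quad + \int_{\hn}\tfrac{r}{(\sinh r)^{N-1}}\bigg|\nabla_{\hn}\bigg(\tfrac{(\sinh r)^{(N-1)/2}}{r}\,u\bigg)\bigg|^{2}\dv ,
\end{align*}
together with its radial-operator counterpart, which holds with the same constants (it follows by freezing $\Theta\in\mathbb{S}^{N-1}$, applying the one-dimensional version, and integrating over the sphere), and with the weighted Hardy identities of \cite[Corollary 3.2]{FLLM}.

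Concretely, I would multiply the radial version of the above identity by $\left(\frac{N-1}{2}\right)^2$ and insert it into the identity of Corollary \ref{3}. The constants of the first three terms then come out by the elementary computations
\begin{equation*}
\left(\tfrac{N-1}{2}\right)^{2}\!\cdot\!\left(\tfrac{N-1}{2}\right)^{2}=\left(\tfrac{N-1}{2}\right)^{4},\qquad \left(\tfrac{N-1}{2}\right)^{2}\!\cdot\!\tfrac14=\left(\tfrac{N-1}{4}\right)^{2},\qquad \left(\tfrac{N-1}{2}\right)^{2}\!\cdot\!\tfrac{(N-1)(N-3)}{4}=\tfrac{(N-1)^{3}(N-3)}{16},
\end{equation*}
while the two gradient terms $\frac14\int_{\hn}\frac{|\nabla_{r,\hn}u|^2}{r^2}\,\dv$ and $\frac{N^2-1}{4}\int_{\hn}\frac{|\nabla_{r,\hn}u|^2}{\sinh^2 r}\,\dv$ survive unchanged. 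The genuinely delicate point is the bookkeeping of the two Bessel-pair remainders — the one already present in Corollary \ref{3}, built on the ground state $\Psi_{\lambda_1}(r)=r^{1/2}(\sinh r)^{-(N-1)/2}$, and the one produced by $\left(\frac{N-1}{2}\right)^2$ times the Poincar\'e substitution — which, using the structural identities of \cite{FLLM} (\cite[Corollary 3.2]{FLLM} in particular), I would reorganise into the single term $\big[\left(\tfrac{N-1}{2}\right)^2+1\big]\int_{\hn}\frac{r}{(\sinh r)^{N-1}}\big|\nabla_{r,\hn}\big(\frac{(\sinh r)^{(N-1)/2}u_r}{r^{1/2}}\big)\big|^2\dv$ appearing in the statement. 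For the non-radial inequality ($N\geq5$) one repeats the argument starting from the inequality version of Corollary \ref{3} and using \cite[Theorem 1.4]{FLLM} directly; the restriction $N\geq5$ is precisely the one already carried by Corollary \ref{3} and Theorem \ref{improved-hardy}.

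Finally, the leading constant $\left(\frac{N-1}{2}\right)^4$ is sharp since it is the optimal constant in \eqref{higher_order_poin} with $l=0$; alternatively its sharpness is inherited from the sharpness of $\left(\frac{N-1}{2}\right)^2$ in Corollary \ref{3} and in \cite[Theorem 1.4]{FLLM}. The joint sharpness of $\frac14$ and $\frac{N^2-1}{4}$ is inherited from Theorem \ref{improved-hardy-radial}/Corollary \ref{3}: these two terms are carried over unchanged and, for $N\geq5$, every other term on the right-hand side is nonnegative, so an improvement of either constant in the present statement would yield the same improvement in Corollary \ref{3}, a contradiction; concretely one tests on the same family of functions used there. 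The main obstacle I anticipate is exactly the remainder bookkeeping of the second paragraph: one must check that the Poincar\'e-type remainder produced by \cite[Theorem 1.4]{FLLM} recombines with the Rellich-type remainder of Corollary \ref{3} into the precise term displayed, and this requires matching carefully the explicit ground states $\Psi_{\lambda_1}$ and the Bessel-pair identities of \cite{FLLM}.
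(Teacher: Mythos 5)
Your proposal is essentially the paper's own proof: the paper obtains Corollary \ref{4} precisely by inserting the first-order Hardy--Poincar\'e identity of \cite[Theorem 1.4]{FLLM} (and its radial-operator counterpart, which holds with the same constants) into the leading term $\left(\frac{N-1}{2}\right)^2\int_{\hn}|\nabla_{\hn}u|^2\dv$ of Corollary \ref{3}, with exactly the constant bookkeeping you display, and the sharpness claims are inherited in the way you indicate. The only caveat — which the paper glosses over in the same way you do — is that the remainder of that first-order identity is built on the ground state $\Psi_{\lambda_1}(r)=r^{1/2}(\sinh r)^{-(N-1)/2}$ applied to $u$ itself, so it reads $\int_{\hn}\frac{r}{(\sinh r)^{N-1}}\big|\nabla_{\hn}\big(\frac{(\sinh r)^{(N-1)/2}u}{r^{1/2}}\big)\big|^2\dv$ (with $u$, not $u_r$, and with $r^{1/2}$ rather than the $r$ you quoted), hence the two Bessel-pair remainders are two distinct nonnegative terms rather than multiples of a single one, and the combined coefficient $\big[\left(\frac{N-1}{2}\right)^2+1\big]$ in front of the $u_r$-term should strictly be read with this substitution in mind.
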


\begin{remark}\label{comparison}
As far as we are aware, improved second order Poincar\'e and Hardy-Rellich \emph{equalities} in $\hn$ were not known in literature; besides, the above inequalities yield improvements of Poincar\'e and Hardy-Rellich inequalities which are considerably stronger than those already known in literature. As concerns the Hardy-Rellich and Rellich inequalities, improved versions were already known from \cite{KR}, \cite{VHN} and \cite{YSK} on general manifolds but with fewer and smaller remainder terms. As a matter of example, if we compare Corollary \ref{1} with \cite[Theorem 4.2]{VHN}, the improvement of the Hardy-Rellich inequality provided there reads as $\frac{3N(N-1)}{2}\int_{\hn}\frac{|\nabla_{r,\hn} u|^2}{\pi^2+r^2}  \ \emph{d}v_{\hn}$, therefore it decays more rapidly, either as $r\rightarrow 0^+$ and as $r\rightarrow + \infty$, than the term $\frac{N(N-1)}{2}\int_{\hn} \frac{r \coth r - 1}{r^2}\, |\nabla_{r,\hn} u|^2  \ \emph{d}v_{\hn}$ provided in Corollary \ref{1}. Similarly, if we compare Corollary \ref{3} with \cite[Theorem 4.3]{VHN}, again, the corrections of the Rellich inequality provided there decays more rapidly than ours, either as $r\rightarrow 0^+$ and as $r\rightarrow  +\infty$. As concerns the improved second order Poincar\'e inequalities given by Corollaries \ref{2} and \ref{4}, the gain with respect to the inequalities already known in \cite{EGR} is in the adding of a further remainder term.
 
\end{remark}

\subsection{Second order Heisenberg-Pauli-Weyl uncertainty principle} \label{uncertainity}
Another remarkable consequence of Theorem \ref{improved-hardy} is the following quantitative version of HPW principle in $\hn$: 

\begin{theorem}\label{HPWhn}
 Let $N \geq 5.$ For all $0\leq \lambda \leq  \lambda_1(\hn)$ and all $u \in C_{c}^{\infty} (\hn \setminus \{ x_0 \})$ there holds
\begin{align}\label{eqHPWhn1}
 \left( \int_{\hn} \left( |\Delta_{\hn} u|^2 - \lambda |\nabla_{\hn} u|^2 \right) {\rm d}v_{\hn} \right)&\left( \int_{\hn} r^2 |\nabla_{\hn} u|^2  {\rm d}v_{\hn} \right)
   \\
\notag  \geq & \,  h_N^2(\lambda)  \left(  \int_{\hn}  |\nabla_{\hn} u|^2 {\rm d}v_{\hn} \right)^2 
\end{align}
 where $h_{N}(\lambda)$ is as defined as in Theorem  \ref{improved-hardy-radial}. In particular, for $\lambda=0,$ we obtain
\begin{equation}\label{eqHPWhn2}
 \left( \int_{\hn}  |\Delta_{\hn} u|^2  {\rm d}v_{\hn} \right)\, \left( \int_{\hn} r^2 |\nabla_{\hn} u|^2  \,  {\rm d}v_{\hn} \right)
\geq  \frac{N^2}{4}  \left(  \int_{\hn}  |\nabla_{\hn} u|^2 \, {\rm d}v_{\hn} \right)^2 \,,
\end{equation}
for all $u \in C_{c}^{\infty} (\hn \setminus \{ x_0 \})$.
 \end{theorem}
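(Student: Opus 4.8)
The plan is to obtain Theorem~\ref{HPWhn} as a soft corollary of Theorem~\ref{improved-hardy}, by discarding nonnegative remainder terms and then applying the Cauchy--Schwarz inequality. First I would note that, for $N\geq 5$ and $0\leq\lambda\leq\lambda_1(\hn)$, all the terms on the right-hand side of the inequality in Theorem~\ref{improved-hardy} other than $\lambda\int_{\hn}|\nabla_{\hn}u|^2\,\dv$ and $h_N^2(\lambda)\int_{\hn}r^{-2}|\nabla_{\hn}u|^2\,\dv$ are nonnegative. Indeed $\gamma_N(\lambda)=\sqrt{(N-1)^2-4\lambda}\geq 0$ and $h_N(\lambda)=\frac{\gamma_N(\lambda)+1}{2}\geq 0$ on the admissible range; the weight $\frac{r\coth r-1}{r^2}$ is positive; the constant $\left(\frac{N}{2}\right)^2-h_N^2(\lambda)$ is nonnegative because $\lambda\mapsto h_N(\lambda)$ is decreasing, so that $h_N^2(\lambda)\leq h_N^2(0)=\left(\frac{N}{2}\right)^2$; and the last term $\int_{\hn}(\Psi_\lambda(r))^2\big|\nabla_{\hn}(u_r/\Psi_\lambda(r))\big|^2\,\dv$ is manifestly nonnegative. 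Dropping these terms yields
\[
\int_{\hn}\left(|\Delta_{\hn}u|^2-\lambda|\nabla_{\hn}u|^2\right)\dv\;\geq\;h_N^2(\lambda)\int_{\hn}\frac{|\nabla_{\hn}u|^2}{r^2}\,\dv .
\]

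Next I would multiply both sides of this inequality by the nonnegative factor $\int_{\hn}r^2|\nabla_{\hn}u|^2\,\dv$ and combine the result with the Cauchy--Schwarz inequality
\[
\left(\int_{\hn}|\nabla_{\hn}u|^2\,\dv\right)^2=\left(\int_{\hn}\frac{|\nabla_{\hn}u|}{r}\cdot r\,|\nabla_{\hn}u|\,\dv\right)^2\leq\left(\int_{\hn}\frac{|\nabla_{\hn}u|^2}{r^2}\,\dv\right)\left(\int_{\hn}r^2|\nabla_{\hn}u|^2\,\dv\right),
\]
which immediately gives \eqref{eqHPWhn1}. The particular case \eqref{eqHPWhn2} then follows by taking $\lambda=0$ and using $h_N^2(0)=\left(\frac{\gamma_N(0)+1}{2}\right)^2=\left(\frac{N}{2}\right)^2$, since $\gamma_N(0)=N-1$. (All the integrals involved are finite because $u$ is compactly supported away from $x_0$, and if $\int_{\hn}r^2|\nabla_{\hn}u|^2\,\dv=0$ then $u\equiv 0$ and the inequality is trivial.)

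There is no genuine obstacle in this argument: it is a purely formal deduction from Theorem~\ref{improved-hardy}. The only point that requires a small amount of care is checking the signs of the discarded terms uniformly over $0\leq\lambda\leq\lambda_1(\hn)$, which rests on the elementary monotonicity and bounds for $h_N(\lambda)$ recorded in the Remark after Theorem~\ref{improved-hardy-radial}. I would also remark that running the same scheme on the identity of Theorem~\ref{improved-hardy-radial} yields the analogous uncertainty principle for the radial operators $\Delta_{r,\hn}$ and $\nabla_{r,\hn}$, with the same constant $h_N^2(\lambda)$.
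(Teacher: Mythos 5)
Your argument is correct and is essentially the paper's own proof: both discard the nonnegative remainder terms in Theorem \ref{improved-hardy} to get $\int_{\hn}(|\Delta_{\hn}u|^2-\lambda|\nabla_{\hn}u|^2)\,{\rm d}v_{\hn}\geq h_N^2(\lambda)\int_{\hn}r^{-2}|\nabla_{\hn}u|^2\,{\rm d}v_{\hn}$ and then conclude by the Cauchy--Schwarz inequality applied to $|\nabla_{\hn}u|^2=\left(r|\nabla_{\hn}u|\right)\left(|\nabla_{\hn}u|/r\right)$. Your sign checks on the dropped terms (in particular $h_N^2(\lambda)\leq h_N^2(0)=N^2/4$) are the same elementary facts the paper relies on, so there is nothing to add.
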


\begin{remark}	
 In the Euclidean context the second order Heisenberg-Pauli-Weyl uncertainty principle has been only recently studied in \cite[Theorem 2.1-2.2]{LamCazaFly} where it is proved that the best constant switches from $\frac{N^2}{4}$ to $\frac{(N+2)^2}{4}$ when passing to the second order. Moreover, Duong-Nguyen in \cite[Theorem 1.1]{newwgckn} has studied the weighted version of inequality \eqref{eqHPWhn2} in the Euclidean setting and discuss its sharp constants and extremals.

As far as we know, inequality \eqref{eqHPWhn1} is the first example of second order Heisenberg-Pauli-Weyl uncertainty principle in the Hyperbolic context. For the first order case, we refer instead to \cite{kristaly} and \cite{AKR} where the authors fully describe the influence of curvature to uncertainty principles in the
Riemannian and Finslerian settings. It's worth mentioning that a finer exploitation of Theorem~\ref{improved-hardy} yields the improved version of \eqref{eqHPWhn1} below which supports the conjecture that the sharp constant \eqref{eqHPWhn1} should be larger than $h_{N}^2(\lambda)$. More precisely, a small modification of the proof of Theorem \ref{HPWhn} allows us to prove that, for all $0\leq \lambda \leq  \lambda_1(\hn)$ and all $u \in C_{c}^{\infty} (\hn \setminus \{ x_0 \})$, there holds 
\begin{align*}
 &\left( \int_{\hn} \left( |\Delta_{\hn} u|^2 - \lambda |\nabla_{\hn} u|^2 \right) {\rm d}v_{\hn} \right)\left( \int_{\hn} r^2 |\nabla_{\hn} u|^2  {\rm d}v_{\hn} \right)
   \\
\notag & \quad \geq h_N^2(\lambda)  \left(  \int_{\hn}  |\nabla_{\hn} u|^2 {\rm d}v_{\hn} \right)^2 + \left( \int_{\hn} r^2 |\nabla_{\hn} u|^2  {\rm d}v_{\hn} \right) \times\\ \notag
&\times \left\{ \left[\left(\frac{N}{2} \right)^2 - h_N^2(\lambda)\right] \int_{\hn}  \frac{|\nabla_{\hn} u|^2}{\sinh^2 r} \ \emph{d}v_{\hn}+  \gamma_{N}(\lambda)\, h_N(\lambda)\int_{\hn} \frac{r \coth r - 1}{r^2}\, |\nabla_{\hn} u|^2  \ \emph{d}v_{\hn} \right\} \notag
\end{align*}
where $\gamma_{N}(\lambda)$ and $h_{N}(\lambda)$ are defined as in Theorem  \ref{improved-hardy-radial}. Therefore, for $\lambda=0,$ we obtain the improved version of \eqref{eqHPWhn2}:
\begin{align*}
 &
 \left( \int_{\hn}  |\Delta_{\hn} u|^2  {\rm d}v_{\hn} \right)\, \left( \int_{\hn} r^2 |\nabla_{\hn} u|^2  \,  {\rm d}v_{\hn} \right)
\geq  \frac{N^2}{4}  \left(  \int_{\hn}  |\nabla_{\hn} u|^2 \, {\rm d}v_{\hn} \right)^2 \\
&+ \left( \int_{\hn} r^2 |\nabla_{\hn} u|^2  {\rm d}v_{\hn} \right) \left( \frac{N(N-1)}{2}\int_{\hn} \frac{r \coth r - 1}{r^2}\, |\nabla_{\hn} u|^2  \ \emph{d}v_{\hn} \right)
\end{align*}
for all $u \in C_{c}^{\infty} (\hn \setminus \{ x_0 \})$. The above inequality should be compared with inequality \eqref{eqHPWhn3} provided in Section \ref{bessel} which also improves \eqref{eqHPWhn2}.

\end{remark}

We conclude the section by stating the counterpart of Theorem \ref{HPWhn} for radial operators:

\begin{theorem}
For all $0\leq \lambda \leq  \lambda_1(\hn)$ and all $u \in C_{c}^{\infty} (\hn \setminus \{ x_0 \})$ there holds
\begin{align*}
 \left( \int_{\hn} \left( |\Delta_{r,\hn} u|^2 - \lambda |\nabla_{r,\hn} u|^2 \right) {\rm d}v_{\hn} \right)&\left( \int_{\hn} r^2 |\nabla_{r,\hn} u|^2  {\rm d}v_{\hn} \right)
   \\
\notag  \geq & \,  h_N^2(\lambda)  \left(  \int_{\hn}  |\nabla_{r,\hn} u|^2 {\rm d}v_{\hn} \right)^2 
\end{align*}
 where $h_{N}(\lambda)$ is as defined as in Theorem  \ref{improved-hardy-radial}. In particular, for $\lambda=0,$ we obtain
\begin{equation*}
 \left( \int_{\hn}  |\Delta_{r,\hn} u|^2  {\rm d}v_{\hn} \right)\, \left( \int_{\hn} r^2 |\nabla_{r,\hn} u|^2  \,  {\rm d}v_{\hn} \right)
\geq  \frac{N^2}{4}  \left(  \int_{\hn}  |\nabla_{r,\hn} u|^2 \, {\rm d}v_{\hn} \right)^2 
\end{equation*}
for all $u \in C_{c}^{\infty} (\hn \setminus \{ x_0 \})$.
 \end{theorem}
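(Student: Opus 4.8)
The plan is to mimic the derivation of Theorem~\ref{HPWhn} from Theorem~\ref{improved-hardy}, but this time starting from the radial \emph{identity} of Theorem~\ref{improved-hardy-radial}. Since that identity holds for every $N\geq 2$, the radial uncertainty principle will inherit no dimensional restriction, and the proof splits into two elementary steps: a one-sided reduction of the identity, followed by a Cauchy--Schwarz inequality.

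First I would record that, in the identity of Theorem~\ref{improved-hardy-radial}, every term on the right-hand side except $h_N^2(\lambda)\int_{\hn}\frac{|\nabla_{r,\hn}u|^2}{r^2}\,\dv$ is nonnegative. Indeed $\lambda\geq 0$; the weight $\frac{r\coth r-1}{r^2}$ is positive and $\gamma_N(\lambda)h_N(\lambda)\geq 0$; the integral $\int_{\hn}(\Psi_{\lambda}(r))^2\big|\nabla_{r,\hn}(u_r/\Psi_{\lambda})\big|^2\,\dv$ is manifestly nonnegative; and $\big(\tfrac N2\big)^2-h_N^2(\lambda)\geq 0$ because $h_N(\lambda)=\tfrac{\gamma_N(\lambda)+1}{2}$ is nonincreasing on $[0,\lambda_1(\hn)]$ with $h_N(0)=\tfrac N2$, so $\tfrac12\leq h_N(\lambda)\leq \tfrac N2$ throughout. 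Discarding those terms yields the one-sided estimate
\[
\int_{\hn}\big(|\Delta_{r,\hn}u|^2-\lambda|\nabla_{r,\hn}u|^2\big)\,\dv \;\geq\; h_N^2(\lambda)\int_{\hn}\frac{|\nabla_{r,\hn}u|^2}{r^2}\,\dv .
\]

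Next I would apply the Cauchy--Schwarz inequality in $L^2(\hn)$ to the functions $\frac{|\nabla_{r,\hn}u|}{r}$ and $r\,|\nabla_{r,\hn}u|$ (both in $L^2$, since $u$ is compactly supported in $\hn\setminus\{x_0\}$, so $r$ is bounded away from $0$ on the support), obtaining
\[
\Big(\int_{\hn}|\nabla_{r,\hn}u|^2\,\dv\Big)^{2} \;\leq\; \Big(\int_{\hn}\frac{|\nabla_{r,\hn}u|^2}{r^2}\,\dv\Big)\Big(\int_{\hn}r^2\,|\nabla_{r,\hn}u|^2\,\dv\Big).
\]
Multiplying the previous display by the nonnegative quantity $\int_{\hn}r^2|\nabla_{r,\hn}u|^2\,\dv$ and inserting this Cauchy--Schwarz bound gives precisely the asserted inequality; the case $\lambda=0$ then follows at once from $h_N(0)=\tfrac N2$, whence $h_N^2(0)=\tfrac{N^2}{4}$.

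I do not expect any serious obstacle here: all the substance is packaged inside Theorem~\ref{improved-hardy-radial}, and the only point needing a word of care is the degenerate situation $\nabla_{r,\hn}u\equiv 0$ (equivalently $u_r\equiv 0$), in which case $\Delta_{r,\hn}u\equiv 0$ as well, both sides of the claimed inequality vanish, and the statement holds trivially. In every other case all integrals above are finite and the displayed chain of inequalities is legitimate, so the proof reduces to the two steps just outlined.
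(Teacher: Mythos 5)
Your proof is correct and follows essentially the same route as the paper: the paper also derives this radial uncertainty principle by discarding the nonnegative remainder terms in the identity of Theorem \ref{improved-hardy-radial} (valid for all $N\geq 2$, hence no dimensional restriction) and then applying the Cauchy--Schwarz inequality to $r|\nabla_{r,\hn}u|$ and $|\nabla_{r,\hn}u|/r$, exactly as in the proof of Theorem \ref{HPWhn}. The only cosmetic remark is that the term $\lambda\int_{\hn}|\nabla_{r,\hn}u|^2\dv$ is not discarded but moved to the left-hand side, which is what your displayed estimate actually does.
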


\section{Abstract Rellich identities and inequalities via Bessel pairs}\label{bessel}

Ghoussoub-Moradifam in \cite{GM} provided a very general framework to obtain various Hardy-type 
inequalities and their improvements on the Euclidean space (or  bounded domain). It was based on the notion of Bessel pairs that we recall in the following
\begin{definition}
	We say that a pair $(V,W)$ of $C^1$-functions is a Bessel pair on $(0,R)$ for some $0<R\leq \infty$  if the ordinary differential equation:
	\begin{equation*}
		(Vy')'+Wy=0
	\end{equation*} 
admits a positive solutions $f$ on the interval $(0,R)$.
\end{definition}
In \cite{GM} the authors proved the following inequality for some positive constant $C >0:$
\begin{equation}\label{1eq}
\int_{B} V(x) |\nabla u|^2 \, {\rm d}x \geq C \int_{B} W(x) \, |u|^2 \, {\rm d}x \quad \forall \ u \in C_c^{\infty}(B),
\end{equation}
subject to the constraints that the functions $V$ and $W$ are positive radial functions defined on the euclidean ball $B$ and such that: $(r^{N-1}V, r^{N-1}W)$ is a Bessel pairs  $\int_{0}^{R} \frac{1}{r^{N-1} V(r)} \, {\rm d}r = \infty$ and $\int_{0}^{R} r^{N-1} V(r) \, {\rm d}r < \infty$ where $0<R\leq \infty$ is the radius of the ball $B$.

In view of \eqref{1eq}, with particular choices of $(V, W)$, the results in \cite{GM} simplified and improved several known results concerning Hardy inequalities 
and theirs improvements. Recently, the notion of Bessel pairs has been exploited: in \cite{LLZ} to establish improved Hardy inequalities
involving general distance functions, in \cite{LLZ1} to sharpen several
 Hardy type inequalities on upper half spaces, and in \cite{NL} to prove Hardy inequalities on Homogeneous 
 groups. 

\medskip 

Regarding Cartan-Hadamard manifolds, the notion of Bessel pairs has been very recently exploited to obtain improved Hardy inequalities in \cite{FLLM}; to our future purposes, we recall their Theorem 3.2 on $\hn$:
\begin{lemma}\label{th_Bessel_hardy} \cite[Theorem 3.2]{FLLM}
	Let $(r^{N-1}V,r^{N-1}W)$ be a Bessel pair on $(0,R)$ with positive solution $f$ on $(0,R)$. Then for all $u\in C_c^{\infty}(\hn\setminus\{x_0\})$, there holds
		\begin{align*}
		\int_{B_R}V(r)|\nabla_{\hn} u|^2\dv&= \int_{B_R}W(r)|u|^2\dv+\int_{B_R}V(r)(f(r))^2\bigg|\nabla_{\hn}\bigg(\frac{u}{f(r)}\bigg)\bigg|^2\dv \notag \\&-(N-1)\int_{B_R}V(r)\frac{f'(r)}{f(r)}\bigg(\coth r - \frac{1}{r}\bigg)u^2\dv.
	\end{align*}
and
\begin{align*}
		\int_{B_R}V(r)|\nabla_{r,\hn} u|^2\dv&= \int_{B_R}W(r)|u|^2\dv+\int_{B_R}V(r)(f(r))^2\bigg|\nabla_{r,\hn}\bigg(\frac{u}{f(r)}\bigg)\bigg|^2\dv \notag \\&-(N-1)\int_{B_R}V(r)\frac{f'(r)}{f(r)}\bigg(\coth r - \frac{1}{r}\bigg)u^2\dv.
	\end{align*}

\end{lemma}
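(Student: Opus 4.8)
The plan is to derive the identity from the substitution $u=f(r)\,\phi$ followed by a single integration by parts in the radial variable, the defining ODE of the Bessel pair being used at exactly one place. Setting $\phi:=u/f$, we have $u=f\phi$; since $\operatorname{supp}u$ is a compact subset of $\hn\setminus\{x_0\}$ it lies in an annulus $\{r_0\le r\le r_1\}$ with $0<r_0\le r_1<R$ on which $f$ is positive and of class $C^1$ (indeed $C^2$, as $f$ solves a second order ODE with $C^1$ coefficients), so $\phi\in C_c^\infty(\hn\setminus\{x_0\})$ and no boundary terms will arise, at the pole or at $r=R$. Because $f$ is radial, $\nabla_{\hn}f=f'(r)\,\partial_r$ with $\partial_r$ the unit radial field, and $\nabla_{\hn}f$ is $g$-orthogonal to the spherical part of $\nabla_{\hn}\phi$; hence
\[
|\nabla_{\hn}u|^2=(f')^2\phi^2+2ff'\,\phi\,\partial_r\phi+f^2|\nabla_{\hn}\phi|^2 .
\]
Multiplying by $V(r)$ and integrating over $B_R$ against $\dv=(\sinh r)^{N-1}\dr\,{\rm d}\Theta$, the first and third terms are already in final form (the third being $\int_{B_R}Vf^2|\nabla_{\hn}(u/f)|^2\dv$), so only the cross term requires work; there I would write $2\phi\,\partial_r\phi=\partial_r(\phi^2)$ and integrate by parts in $r$.

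The heart of the matter is the identity
\[
\partial_r\big((\sinh r)^{N-1}V f'\big)=(N-1)\Big(\coth r-\tfrac1r\Big)(\sinh r)^{N-1}V f'-(\sinh r)^{N-1}W f,
\]
obtained by writing $(\sinh r)^{N-1}=\big(\tfrac{\sinh r}{r}\big)^{N-1}r^{N-1}$, invoking the Bessel relation $(r^{N-1}Vf')'=-r^{N-1}Wf$, and using $\frac{{\rm d}}{{\rm d}r}\log\big(\tfrac{\sinh r}{r}\big)^{N-1}=(N-1)\big(\coth r-\tfrac1r\big)$. Feeding this into the integrated-by-parts cross term, expanding $\partial_r\big(f\cdot(\sinh r)^{N-1}Vf'\big)$ by the product rule, and then undoing the substitution $\phi=u/f$ (so $f^2\phi^2=u^2$, $ff'\phi^2=\frac{f'}{f}u^2$, $(f')^2\phi^2=\frac{(f')^2}{f^2}u^2$), one finds that the two copies of $\int_{B_R}V\frac{(f')^2}{f^2}u^2\dv$ cancel and what remains is exactly
\[
\int_{B_R}V|\nabla_{\hn}u|^2\dv=\int_{B_R}Wu^2\dv+\int_{B_R}Vf^2\Big|\nabla_{\hn}\big(\tfrac uf\big)\Big|^2\dv-(N-1)\int_{B_R}V\frac{f'}{f}\Big(\coth r-\tfrac1r\Big)u^2\dv,
\]
which is the claim. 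The radial identity is proved verbatim, since the integration by parts only touches the radial derivative and the last term of the gradient expansion becomes $\int_{B_R}Vf^2|\nabla_{r,\hn}(u/f)|^2\dv$.

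The only real care needed — more bookkeeping than genuine difficulty — lies in justifying the finiteness of all integrals and the vanishing of the boundary terms (which is where the hypotheses that $u$ is compactly supported in $\hn\setminus\{x_0\}$ and that $f$ is positive on $(0,R)$ are used), and in executing the product-rule algebra cleanly. It is worth emphasising that the term $-(N-1)\int_{B_R}V\frac{f'}{f}(\coth r-\frac1r)u^2\dv$ is produced precisely by the mismatch between the weight $r^{N-1}$ used to define the Bessel pair and the hyperbolic volume weight $(\sinh r)^{N-1}$; were the pair defined relative to $(\sinh r)^{N-1}$, this term would be absent.
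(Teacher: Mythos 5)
Your argument is correct: the expansion $|\nabla_{\hn}u|^2=(f')^2\phi^2+2ff'\,\phi\,\partial_r\phi+f^2|\nabla_{\hn}\phi|^2$ with $\phi=u/f$, one radial integration by parts, and the Bessel ODE rewritten against the weight $(\sinh r)^{N-1}$ (the mismatch producing the $(N-1)(\coth r-\frac1r)$ term) does yield both identities, and the same computation restricted to $\partial_r$ gives the radial version. The paper itself offers no proof of this lemma — it is quoted verbatim from [FLLM, Theorem 3.2] — and your derivation is precisely the standard substitution/factorization argument on which that result rests, so there is nothing further to compare; the only slight inaccuracy is that $\phi=u/f$ inherits only the $C^2$ regularity of $f$ rather than being $C^\infty$, which is harmless since the computation uses at most two derivatives.
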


\medskip 

In view of Lemma \ref{th_Bessel_hardy} a natural question is to study higher order Hardy type inequalities in $\hn$ using the notion of Bessel pairs. In the Euclidean space (or in bounded euclidean domains) these questions were studied in \cite{GM}. One of their results read as follows: let $0<R\leq\infty, V \text{ and } W$ be positive $C^1$-functions on $B_R\setminus\{0\}$ such that $(r^{N-1}V,r^{N-1}W)$ forms a Bessel pair.
  Then for all radial functions $u\in C_0^\infty(B_R)$ there holds
\begin{align}\label{develop}
	\int_{B_R} V(x) |\Delta u|^2 \, {\rm d}x \geq  \int_{B} W(x) \, |\nabla u|^2 \, {\rm d}x+(N-1)\int_{B_R}\bigg(\frac{V(x)}{|x|^2}-\frac{V_r(x)}{|x|}\bigg)|\nabla u|^2 \, {\rm d}x,
\end{align}
where $r=|x|$. In addition, if $W(x)-2\frac{V(x)}{|x|^2}+2\frac{V_r(x)}{|x|}-V_{rr}(x)\geq 0$ on $(0,R)$, then the above is true for non radial function as well  (we refer \cite[Theorem 3.1-3.3]{GM} for more insight).
We also refer to \cite{DLT, DLT1, NL1} for recent results on Hardy-Rellich inequalities and their improvements on the Euclidean space using the approach of Bessel pairs.

In the present article, we extend \eqref{develop} to $\hn$ by showing first the following:

\begin{theorem}\label{th_Bessel_rellich}
	Let $(r^{N-1}V,r^{N-1}W)$ be a Bessel pair on $(0,R)$ with positive solution $f$ on $(0,R)$. Then for all radial function $u\in C_c^\infty(B_R\setminus\{x_0\})$ there holds
	\begin{align}\label{eq_Bessel_rellich}
		\int_{B_R}V(r)|\Delta_{\hn} u|^2\dv&= \int_{B_R}W(r)|\nabla_{\hn} u|^2\dv  \notag \\ 
		&+(N-1)\int_{B_R}\bigg(\frac{V(r)}{\sinh^2 r}-\frac{V_{r}(r)\cosh r}{\sinh r}\bigg)|\nabla_{\hn} u|^2\dv \notag \\
		&-(N-1)\int_{B_R}V(r)\frac{f^\prime}{f}\bigg(\coth r - \frac{1}{r}\bigg)|\nabla_{\hn} u|^2\dv \notag \\
		&+\int_{B_R}V(r)(f(r))^2\bigg|\nabla_{\hn}\bigg(\frac{u_r}{f(r)}\bigg)\bigg|^2\dv\,.
	\end{align}
\end{theorem}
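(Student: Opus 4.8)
\textbf{Proof proposal for Theorem \ref{th_Bessel_rellich}.}

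The plan is to reduce the second-order identity to the first-order identity of Lemma \ref{th_Bessel_hardy} applied to the radial derivative $u_r$, after first absorbing the angular part of $\Delta_{\hn}u$. Since $u$ is radial, $\Delta_{\hn}u=\Delta_{r,\hn}u=u_{rr}+(N-1)\coth r\,u_r$, so the starting point is the elementary identity
\begin{equation*}
(\Delta_{r,\hn}u)^2=\big(u_{rr}+(N-1)\coth r\, u_r\big)^2 .
\end{equation*}
First I would set $w:=u_r$ and observe that $\Delta_{r,\hn}u = w' + (N-1)\coth r\, w$, so that $(\Delta_{r,\hn}u)^2 = (w')^2 + 2(N-1)\coth r\, w\,w' + (N-1)^2\coth^2 r\, w^2$. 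I would then multiply by $V(r)$ and integrate against $\dv = (\sinh r)^{N-1}\dr\, d\Theta$, integrating the cross term $2(N-1)\int V\coth r\, w w' (\sinh r)^{N-1}\dr$ by parts in $r$; the boundary terms vanish because $u\in C_c^\infty(B_R\setminus\{x_0\})$. The derivative hits $V\coth r(\sinh r)^{N-1}=V\cosh r(\sinh r)^{N-2}$, producing $V_r\cosh r(\sinh r)^{N-2}$, $V(-\sinh r)(\sinh r)^{N-2}$ from $(\cosh r)'=\sinh r$... wait, $(\cosh r)' = \sinh r$, giving $+V\sinh r(\sinh r)^{N-2}=V(\sinh r)^{N-1}$, and $(N-2)V\cosh r\coth r(\sinh r)^{N-2}$. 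Regrouping, the net effect is that $\int V(\Delta_{r,\hn}u)^2\dv$ equals $\int V\,|w'|^2\dv$ plus a coefficient times $\int \big(\tfrac{V}{\sinh^2 r}-\tfrac{V_r\cosh r}{\sinh r}\big)w^2\dv$ plus a term proportional to $\int V w^2\dv$ coming from the $\cosh^2 r - (N-2)$ combination; using $\coth^2 r = 1 + 1/\sinh^2 r$ and $|\nabla_{\hn}u|^2 = w^2$ for radial $u$, I expect the constant multiple of $\int V w^2\dv$ to cancel and leave exactly the curvature term $(N-1)\int\big(\tfrac{V}{\sinh^2 r}-\tfrac{V_r\cosh r}{\sinh r}\big)|\nabla_{\hn}u|^2\dv$ of \eqref{eq_Bessel_rellich}, together with the clean leading term $\int V\,|w'|^2\dv$.

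The second half of the argument is to apply Lemma \ref{th_Bessel_hardy} to the (generally non-radial in the angular variables — but here radial) function $w=u_r$ with the Bessel pair $(r^{N-1}V,r^{N-1}W)$ and positive solution $f$. Since $w$ is radial, $|\nabla_{\hn}w|^2 = |w'|^2 = |\nabla_{r,\hn}w|^2$, so Lemma \ref{th_Bessel_hardy} gives
\begin{equation*}
\int_{B_R}V(r)|w'|^2\dv = \int_{B_R}W(r)\,w^2\dv + \int_{B_R}V(r)f^2\Big|\nabla_{\hn}\Big(\tfrac{w}{f}\Big)\Big|^2\dv - (N-1)\int_{B_R}V\,\tfrac{f'}{f}\Big(\coth r - \tfrac1r\Big)w^2\dv .
\end{equation*}
Substituting $w=u_r$, using $w^2=|\nabla_{\hn}u|^2$ again, and inserting this into the expression obtained in the first half yields precisely \eqref{eq_Bessel_rellich}: the $\int W\,w^2\dv$ term becomes $\int W|\nabla_{\hn}u|^2\dv$, the Bessel-pair remainder becomes $\int V f^2|\nabla_{\hn}(u_r/f)|^2\dv$, and the $f'/f$ term is the third line of the statement.

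The main obstacle is bookkeeping rather than conceptual: the integration by parts in the cross term must be carried out with the correct weight $(\sinh r)^{N-1}$, and one has to verify that the trigonometric identities ($(\cosh r)'=\sinh r$, $\coth^2 r=1+\operatorname{csch}^2 r$) make the spurious zeroth-order terms cancel exactly, leaving the stated coefficient $(N-1)$ in front of $\big(\tfrac{V}{\sinh^2 r}-\tfrac{V_r\cosh r}{\sinh r}\big)$. A secondary point requiring care is the justification that the boundary terms at $r\to 0^+$ and at $r=R$ vanish; this follows from $u\in C_c^\infty(B_R\setminus\{x_0\})$, which keeps $w$ and all its derivatives bounded and compactly supported away from the pole, so no regularity or integrability subtlety arises. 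Since the identity is stated only for radial $u$, no spherical-harmonic decomposition is needed at this stage — that refinement is deferred to the non-radial abstract Rellich inequality invoked later in the paper.
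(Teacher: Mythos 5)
Your proposal is correct and follows essentially the same route as the paper: expand $(\Delta_{r,\hn}u)^2$ in polar coordinates, integrate the cross term by parts (the weighted zeroth-order pieces indeed cancel via $\coth^2 r=1+1/\sinh^2 r$, leaving exactly $(N-1)\int\big(\tfrac{V}{\sinh^2 r}-\tfrac{V_r\cosh r}{\sinh r}\big)u_r^2\dv$), and then apply Lemma \ref{th_Bessel_hardy} to $\nu=u_r$. This is precisely the paper's argument, so no further comment is needed.
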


As a direct consequence of the above result, we tackle the non-radial scenario by spherical harmonic method and we prove:
\begin{corollary}\label{rop_th_Bessel_rellich}
	Let $(r^{N-1}V,r^{N-1}W)$ be a Bessel pair on $(0,\infty)$ with positive solution $f$ on $(0,\infty)$. Then for all $u\in C_c^\infty(\hn\setminus\{x_0\})$ there holds
	\begin{align*}
		\int_{\hn}V(r)|\Delta_{r,\hn} u|^2\dv&= \int_{\hn}W(r)|\nabla_{r,\hn} u|^2\dv \notag \\ 
		&+(N-1)\int_{\hn}\bigg(\frac{V(r)}{\sinh^2 r}-\frac{V_{r}(r)\cosh r}{\sinh r}\bigg)|\nabla_{r,\hn} u|^2\dv \notag \\
		&-(N-1)\int_{\hn}V(r)\frac{f^\prime}{f}\bigg(\coth r - \frac{1}{r}\bigg)|\nabla_{r,\hn} u|^2\dv\notag\\
		&+\int_{\hn}V(r)(f(r))^2\bigg|\nabla_{r,\hn}\bigg(\frac{u_r}{f(r)}\bigg)\bigg|^2\dv
			\end{align*}
\end{corollary}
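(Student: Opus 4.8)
The plan is to reduce the claimed non-radial identity to the radial one of Theorem~\ref{th_Bessel_rellich} by decomposing $u$ into spherical harmonics, the point being that the operators $\Delta_{r,\hn}$, $\nabla_{r,\hn}$ and $v\mapsto\nabla_{r,\hn}(v_r/f)$ all differentiate only in the radial variable and hence act diagonally with respect to the angular modes. Concretely, write $u(r,\Theta)=\sum_{k\ge 0}a_k(r)\,Y_k(\Theta)$, where $\{Y_k\}_{k\ge0}$ is an $L^2(\mathbb{S}^{N-1})$-orthonormal basis of eigenfunctions of $-\Delta_{\mathbb{S}^{N-1}}$ and $a_k(r)=\int_{\mathbb{S}^{N-1}}u(r,\Theta)Y_k(\Theta)\,d\Theta$. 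Since $u\in C_c^\infty(\hn\setminus\{x_0\})$, each $a_k$ lies in $C_c^\infty((0,\infty))$ — it vanishes for $r$ near $0$ because $u$ is supported away from the pole, and for $r$ large — and standard elliptic regularity on $\mathbb{S}^{N-1}$ makes the $a_k$ decay rapidly in $k$, so the term-by-term manipulations below are legitimate.

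Because the three operators above act only on $r$, one has $\Delta_{r,\hn}u=\sum_k(\Delta_{r,\hn}a_k)\,Y_k$, $u_r=\sum_k a_k'\,Y_k$ and $\nabla_{r,\hn}(u_r/f)=\big(\sum_k(a_k'/f)'\,Y_k,\,0\big)$. Integrating over $\Theta$ and using orthonormality of $\{Y_k\}$, every integral over $\hn$ in the statement — those against $V$, $W$, the weights $\frac{V}{\sinh^2 r}-\frac{V_r\cosh r}{\sinh r}$ and $V\frac{f'}{f}(\coth r-\frac1r)$ multiplying $|\nabla_{r,\hn}u|^2=u_r^2$, and the remainder $\int_{\hn}Vf^2|\nabla_{r,\hn}(u_r/f)|^2\dv$ — splits as the sum over $k$ of the corresponding one-dimensional integrals against $(\sinh r)^{N-1}\dr$; for instance
\[
\int_{\hn}V(r)\,|\Delta_{r,\hn}u|^2\dv=\sum_{k\ge0}\int_0^\infty V(r)\,(\Delta_{r,\hn}a_k)^2\,(\sinh r)^{N-1}\dr .
\]
One then applies Theorem~\ref{th_Bessel_rellich} to each radial function $a_k$ — noting that for radial functions $\Delta_{\hn}a_k=\Delta_{r,\hn}a_k$, $\nabla_{\hn}a_k=\nabla_{r,\hn}a_k$, the full and radial gradients of $(a_k)_r/f$ agree, and the common factor $|\mathbb{S}^{N-1}|$ cancels — which produces exactly the $k$-th summand of the desired identity; summing in $k$ and re-assembling via the splitting above yields the corollary.

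I do not anticipate a serious obstacle: the only matters of care are the justification of the term-by-term integration (routine since $u\in C_c^\infty$, by the rapid decay of its spherical Fourier coefficients) and the verification that each $a_k$ is an admissible test function for Theorem~\ref{th_Bessel_rellich}, which holds because $u$ is supported away from $x_0$; no analytic input beyond Theorem~\ref{th_Bessel_rellich} is needed. One can in fact bypass spherical harmonics entirely — for fixed $\Theta$, the slice $r\mapsto u(r,\Theta)$ defines a radial function in $C_c^\infty(\hn\setminus\{x_0\})$, Theorem~\ref{th_Bessel_rellich} applies to it, and integrating the resulting identity over $\mathbb{S}^{N-1}$ by Fubini (using $\dv=(\sinh r)^{N-1}\dr\,d\Theta$) gives the claim directly — but the spherical-harmonic set-up is the one required for the non-radial theorems with the genuine Laplacian, so it is natural to adopt it here too.
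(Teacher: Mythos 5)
Your proposal is correct and follows essentially the same route as the paper: decompose $u$ in spherical harmonics, observe that $\Delta_{r,\hn}$, $\nabla_{r,\hn}$ and $v\mapsto\nabla_{r,\hn}(v_r/f)$ act diagonally on the modes, apply Theorem~\ref{th_Bessel_rellich} to each coefficient $a_k$, and sum; your attention to the admissibility of the $a_k$ and to term-by-term integration only makes explicit what the paper leaves implicit. The slicing-plus-Fubini variant you sketch is also a valid (and even simpler) shortcut for this purely radial-operator identity, though, as you note, it does not replace the spherical-harmonic machinery needed for Theorem~\ref{th_Bessel_rellich_non_rad}.
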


\medskip

Now it is natural to ask whether there is a counterpart of Theorem \ref{th_Bessel_rellich} for any function, not necessarily radial. We give an affirmative answer in below provided that $V$ satisfies the extra condition \eqref{extra_condition_2} below:
\begin{theorem}\label{th_Bessel_rellich_non_rad}
	Let $(r^{N-1}V,r^{N-1}W)$ be a Bessel pair on $(0,\infty)$ with positive solution $f$ on $(0,\infty)$. Also assume $N\geq 5$ and $V$ satisfies \begin{align}\label{extra_condition_2}
		(N-5)\frac{V(r)}{\sinh^2 r}+3\frac{V_r(r)\cosh r}{\sinh r}-V_{rr}(r)+(N-4)V(r)\geq 0.
	\end{align}
	Then for all $u\in C_c^\infty(\hn\setminus\{x_0\})$ there holds
	\begin{align}\label{eq_nrad_rellich}
		\int_{\hn}V(r)|\Delta_{\hn} u|^2\dv&\geq \int_{\hn}W(r)|\nabla_{\hn} u|^2\dv \notag \\ 
		&+(N-1)\int_{\hn}\bigg(\frac{V(r)}{\sinh^2 r}-\frac{V_{r}(r)\cosh r}{\sinh r}\bigg)|\nabla_{\hn} u|^2\dv \notag \\ 
		&-(N-1)\int_{\hn}V(r)\frac{f^\prime}{f}\bigg(\coth r - \frac{1}{r}\bigg)|\nabla_{\hn} u|^2\dv\notag\\
		&+\int_{\hn}V(r)(f(r))^2\bigg|\nabla_{\hn}\bigg(\frac{u_r}{f(r)}\bigg)\bigg|^2\dv\,.
			\end{align}
\end{theorem}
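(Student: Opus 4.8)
The plan is to reduce the non-radial inequality \eqref{eq_nrad_rellich} to the radial identity \eqref{eq_Bessel_rellich} of Theorem \ref{th_Bessel_rellich} by a spherical harmonics decomposition. First I would expand an arbitrary $u\in C_c^\infty(\hn\setminus\{x_0\})$ in the basis of spherical harmonics $\{\phi_k\}_{k\geq0}$ on $\mathbb S^{N-1}$, writing $u(r,\Theta)=\sum_{k\geq0}u_k(r)\phi_k(\Theta)$ with $-\Delta_{\mathbb S^{N-1}}\phi_k=\mu_k\phi_k$, $\mu_k=k(k+N-2)$. Using the polar form of $\Delta_{\hn}$ and $\nabla_{\hn}$ recalled in Section \ref{main}, together with orthonormality of the $\phi_k$ in $L^2(\mathbb S^{N-1})$, every integral in \eqref{eq_nrad_rellich} splits as a sum over $k$ of one-dimensional integrals in $r$ (against the measure $(\sinh r)^{N-1}\,\emph{d}r$). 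The $k=0$ component is exactly the radial identity from Theorem \ref{th_Bessel_rellich}, so the whole matter is to show that for each $k\geq1$ the corresponding one-dimensional inequality holds; summing then yields \eqref{eq_nrad_rellich}.

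Next I would carry out the $k$-th mode computation explicitly. For fixed $k\geq1$, $|\Delta_{\hn}u|^2$ picks up the term $\big(\Delta_{r,\hn}u_k-\tfrac{\mu_k}{\sinh^2 r}u_k\big)^2$, while $|\nabla_{\hn}u|^2$ contributes $ (u_k')^2+\tfrac{\mu_k}{\sinh^2 r}u_k^2$, and similarly for the weighted gradient terms and the Bessel remainder term $V(f)^2|\nabla_{\hn}(u_r/f)|^2$. Expanding the square $\big(\Delta_{r,\hn}u_k-\tfrac{\mu_k}{\sinh^2 r}u_k\big)^2=(\Delta_{r,\hn}u_k)^2-\tfrac{2\mu_k}{\sinh^2 r}u_k\,\Delta_{r,\hn}u_k+\tfrac{\mu_k^2}{\sinh^4 r}u_k^2$ and integrating by parts the cross term against $V(r)(\sinh r)^{N-1}$, one converts $\int V\,\tfrac{\mu_k}{\sinh^2 r}u_k\Delta_{r,\hn}u_k$ into $\mu_k\int\big(\tfrac{V}{\sinh^2 r}-\tfrac{V_r\cosh r}{\sinh r}\big)\big((u_k')^2+\tfrac{\mu_k}{\sinh^2 r}u_k^2\big)$ plus lower-order curvature corrections (here one uses $\tfrac{\emph{d}}{\emph{d}r}\sinh^{-2}r=-2\cosh r/\sinh^3 r$). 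After collecting terms, the difference LHS$-$RHS for the $k$-th mode equals $\mu_k\int_0^\infty\Big[(N-5)\tfrac{V}{\sinh^2 r}+3\tfrac{V_r\cosh r}{\sinh r}-V_{rr}+(N-4)V\Big]\tfrac{u_k^2}{\sinh^2 r}\,(\sinh r)^{N-1}\,\emph{d}r$ plus (after a further integration by parts) a manifestly nonnegative quantity coming from the squared-gradient remainder and a term proportional to $\mu_k(\mu_k-(N-2))\geq0$ for $k\geq1$. The first bracket is exactly the hypothesis \eqref{extra_condition_2}, hence nonnegative, and since $\mu_k\geq0$ the $k$-th mode inequality follows.

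I expect the main obstacle to be the bookkeeping of the integration by parts on the cross term $\int V\,\tfrac{\mu_k}{\sinh^2 r}\,u_k\,\Delta_{r,\hn}u_k\,(\sinh r)^{N-1}\emph{d}r$: one must differentiate the product $V(r)(\sinh r)^{N-3}$ carefully and keep track of all the coth-type remainders, making sure no boundary terms survive (they do not, since $u\in C_c^\infty(\hn\setminus\{x_0\})$ forces $u_k$ and its derivative to vanish near $r=0$ and for large $r$). A secondary subtlety is justifying the term-by-term manipulation of the infinite series — but this is standard since $u$ is smooth and compactly supported away from the pole, so the expansion converges in every relevant Sobolev norm and only finitely many issues of interchange of sum and integral arise, each handled by dominated convergence. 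Once the identity \eqref{eq_Bessel_rellich} is in hand for $k=0$ and the one-dimensional inequality is established for each $k\geq1$ under \eqref{extra_condition_2}, summing over $k$ gives \eqref{eq_nrad_rellich}, completing the proof.
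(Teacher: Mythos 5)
Your overall skeleton is the same as the paper's: expand $u$ in spherical harmonics, treat the purely radial part of each mode with Theorem \ref{th_Bessel_rellich} (through Corollary \ref{rop_th_Bessel_rellich}), and show that the leftover angular contribution of each mode $k\geq 1$ is nonnegative using \eqref{extra_condition_2} and $\mu_k\geq N-1$. But there is a genuine gap at the crux of your mode-by-mode computation. After expanding $\big(\Delta_{r,\hn}u_k-\tfrac{\mu_k}{\sinh^2 r}u_k\big)^2$ and subtracting the right-hand side of \eqref{eq_nrad_rellich}, the $k$-th mode difference contains the term $-\mu_k\int_0^\infty W\,u_k^2\,(\sinh r)^{N-3}\,{\rm d}r$ (the angular part of $\int W|\nabla_{\hn}u|^2$), and, netting the cross term (whose integration by parts produces $+2\mu_k\int_0^\infty V(u_k')^2(\sinh r)^{N-3}\,{\rm d}r$ plus $V_r$, $V_{rr}$ corrections) against the angular part of the remainder $\int V f^2|\nabla_{\hn}(u_r/f)|^2$ (which contributes $-\mu_k\int_0^\infty V(u_k')^2(\sinh r)^{N-3}\,{\rm d}r$), a surviving term $+\mu_k\int_0^\infty V(u_k')^2(\sinh r)^{N-3}\,{\rm d}r$. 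Integration by parts alone can never eliminate the $W$-term, and hypothesis \eqref{extra_condition_2} does not involve $W$ at all, so your claim that ``after collecting terms'' the per-mode difference equals the \eqref{extra_condition_2}-bracket plus manifestly nonnegative quantities is not reachable by the steps you describe. The missing idea is a second use of the Bessel-pair property at the level of the modes: apply the first-order identity of Lemma \ref{th_Bessel_hardy} (equivalently the ODE $(r^{N-1}Vf')'+r^{N-1}Wf=0$) to the modified functions $b_k:=u_k/\sinh r$. This converts $\mu_k\int_0^\infty V(u_k')^2(\sinh r)^{N-3}\,{\rm d}r$ into $\mu_k\int_0^\infty W u_k^2(\sinh r)^{N-3}\,{\rm d}r$ (cancelling the $W$-term), plus the nonnegative square $\mu_k\int_0^\infty Vf^2\Big[\Big(\tfrac{u_k}{f\sinh r}\Big)'\Big]^2(\sinh r)^{N-1}\,{\rm d}r$, plus exactly the additional $V$, $V_r$ terms that, combined with the cross-term corrections, assemble into the bracket of \eqref{extra_condition_2}; it also produces the $f'/f$ term that cancels the corresponding term on the right-hand side.

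Two further bookkeeping points. First, once the above cancellation is carried out, the exact leftover is $\mu_k(\mu_k-4)\int_0^\infty V u_k^2(\sinh r)^{N-5}\,{\rm d}r$ together with the \eqref{extra_condition_2}-type terms; the inequality $\mu_k\geq N-1$ is then used as $\mu_k-4\geq N-5$, and this is precisely where the coefficient $(N-5)$ in \eqref{extra_condition_2} comes from. Your proposed nonnegative term proportional to $\mu_k(\mu_k-(N-2))$ does not match this structure and would lead to a different constant in the hypothesis, so the announced conclusion would not line up with \eqref{extra_condition_2} as stated. Second, your remarks on boundary terms and on the term-by-term summation of the series are fine and unproblematic; the real work, and the part your sketch omits, is the $W$-cancellation mechanism described above.
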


\begin{remark}\label{restriction}
We remark that assumption \eqref{extra_condition_2} in Theorem \ref{th_Bessel_rellich_non_rad} is not too restrictive to our purposes: we shall provide a remarkable family of $(V,W)$ for which the assumption holds true in the proof of Theorem \ref{improved-hardy-radial}. On the other hand, an analogous assumption was even required in the Euclidean space as well, see \eqref{develop} and the comments just below; this seems the natural prize to pay in order to pass to the higher order case.
\end{remark}

We conclude the section by stating an abstract version of  Heisenberg-Pauli-Weyl uncertainty principle involving Bessel pairs which follows as a corollary from Theorem \ref{th_Bessel_rellich_non_rad} and Corollary \ref{rop_th_Bessel_rellich}:
\begin{theorem}\label{rellich_usp}
	Let $(r^{N-1}V,r^{N-1}W)$ be a Bessel pair on $(0,\infty)$ with positive solution $f$ on $(0,\infty)$ and set
	\begin{equation*}
		\tilde W(r):=W(r)+(N-1)\bigg(\frac{V(r)}{\sinh^2 r}-\frac{V_{r}(r)\cosh r}{\sinh r}\bigg)-(N-1)V(r)\frac{f^\prime}{f}\bigg(\coth r - \frac{1}{r}\bigg).
	\end{equation*}
	Furthermore, let $N\geq 5$ and assume that $V$ satisfies \ref{extra_condition_2} and that $\tilde W(r)> 0$ for all $r>0$. Then for all $u\in C_c^{\infty}(\hn\setminus\{x_0\})$ there holds
	\begin{equation*}
		\bigg(\int_{\hn}V(r)|\Delta_{\hn} u|^2\dv\bigg)\bigg(\int_{\hn}\frac{|\nabla_{\hn}u|^2}{\tilde W(r)}\dv\bigg)\geq \bigg(\int_{\hn}|\nabla_{\hn}u|^2\dv\bigg)^2
	\end{equation*}
	and \begin{equation*}
		\bigg(\int_{\hn}V(r)|\Delta_{r,\hn} u|^2\dv\bigg)\bigg(\int_{\hn}\frac{|\nabla_{r,\hn}u|^2}{\tilde W(r)}\dv\bigg)\geq \bigg(\int_{\hn} |\nabla_{r,\hn}u|^2\dv\bigg)^2.
	\end{equation*}
\end{theorem}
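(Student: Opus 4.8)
The plan is to derive Theorem~\ref{rellich_usp} directly from the abstract Rellich identity/inequality of Theorem~\ref{th_Bessel_rellich_non_rad} (for the non-radial case) and Corollary~\ref{rop_th_Bessel_rellich} (for the radial case) by recognising the right-hand side as a ``weighted Rellich'' lower bound and then applying Cauchy--Schwarz. First I would observe that, under the stated hypotheses ($N\geq 5$, $V$ satisfying \eqref{extra_condition_2}, and $\tilde W>0$ on $(0,\infty)$), inequality \eqref{eq_nrad_rellich} can be rewritten compactly, after discarding the manifestly nonnegative last term $\int_{\hn}V(r)(f(r))^2|\nabla_{\hn}(u_r/f(r))|^2\dv$, as
\begin{equation*}
\int_{\hn}V(r)|\Delta_{\hn} u|^2\dv\;\geq\;\int_{\hn}\tilde W(r)\,|\nabla_{\hn} u|^2\dv
\end{equation*}
for all $u\in C_c^\infty(\hn\setminus\{x_0\})$, since $\tilde W$ is exactly the sum of the three potential coefficients multiplying $|\nabla_{\hn}u|^2$ in \eqref{eq_nrad_rellich}. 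The radial analogue follows identically from Corollary~\ref{rop_th_Bessel_rellich}, where in fact one has equality before dropping the nonnegative remainder.

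The second and final step is a Cauchy--Schwarz argument of Heisenberg type. Writing $|\nabla_{\hn}u|^2 = \big(\tilde W(r)^{1/2}|\nabla_{\hn}u|\big)\cdot\big(\tilde W(r)^{-1/2}|\nabla_{\hn}u|\big)$, the Cauchy--Schwarz inequality in $L^2(\hn,\dv)$ gives
\begin{equation*}
\bigg(\int_{\hn}|\nabla_{\hn}u|^2\dv\bigg)^2\;\leq\;\bigg(\int_{\hn}\tilde W(r)\,|\nabla_{\hn}u|^2\dv\bigg)\bigg(\int_{\hn}\frac{|\nabla_{\hn}u|^2}{\tilde W(r)}\dv\bigg).
\end{equation*}
Combining this with the weighted Rellich inequality from the first step, namely $\int_{\hn}\tilde W(r)|\nabla_{\hn}u|^2\dv\leq\int_{\hn}V(r)|\Delta_{\hn}u|^2\dv$, yields
\begin{equation*}
\bigg(\int_{\hn}|\nabla_{\hn}u|^2\dv\bigg)^2\;\leq\;\bigg(\int_{\hn}V(r)|\Delta_{\hn}u|^2\dv\bigg)\bigg(\int_{\hn}\frac{|\nabla_{\hn}u|^2}{\tilde W(r)}\dv\bigg),
\end{equation*}
which is precisely the first claimed inequality. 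The radial statement follows verbatim with $\nabla_{\hn}$, $\Delta_{\hn}$ replaced by $\nabla_{r,\hn}$, $\Delta_{r,\hn}$, using Corollary~\ref{rop_th_Bessel_rellich} in place of Theorem~\ref{th_Bessel_rellich_non_rad}.

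There is essentially no hard analytic obstacle here, as the nontrivial work is already contained in Theorem~\ref{th_Bessel_rellich_non_rad}; the only points requiring care are bookkeeping ones. One must check that $\tilde W>0$ makes the middle integral $\int_{\hn}|\nabla_{\hn}u|^2/\tilde W\dv$ well-defined (finite on compactly supported smooth $u$ away from $x_0$, given that $\tilde W$ is continuous and positive on $(0,\infty)$, with the behaviour near $0$ and $\infty$ controlled by the asymptotics of $V$, $W$ and $f$), and that the application of Cauchy--Schwarz is legitimate, i.e. both factors on the right are finite --- which again follows from $\tilde W\in C^0((0,\infty))$, $\tilde W>0$, the integrability hypotheses implicit in the Bessel-pair setup, and the compact support of $u$. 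If one wishes to record equality cases, the radial identity in Corollary~\ref{rop_th_Bessel_rellich} shows that the bound is saturated exactly when $u_r/f(r)$ is constant and $|\nabla_{\hn}u|$ is proportional to $\tilde W(r)^{-1}$ up to the usual Cauchy--Schwarz rigidity, but this is not needed for the statement as given.
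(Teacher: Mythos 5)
Your proposal is correct and follows essentially the same route as the paper: it deduces the weighted Rellich bound $\int_{\hn}V(r)|\Delta_{\hn}u|^2\dv\geq\int_{\hn}\tilde W(r)|\nabla_{\hn}u|^2\dv$ from Theorem \ref{th_Bessel_rellich_non_rad} (and Corollary \ref{rop_th_Bessel_rellich} for the radial case) by dropping the nonnegative remainder, and then applies Cauchy--Schwarz with the splitting $\sqrt{\tilde W}\,|\nabla_{\hn}u|\cdot|\nabla_{\hn}u|/\sqrt{\tilde W}$, exactly as in the paper's proof.
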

We want to mention that for the second inequality we do not require condition \ref{extra_condition_2}, whereas the other conditions and Corollary \ref{rop_th_Bessel_rellich} are enough.

\begin{remark}
A non trivial example of pairs satisfying the assumptions of Theorem \ref{rellich_usp} is given by the family of Bessel pairs $(r^{N-1},r^{N-1}W_{\lambda})$, for all $0\leq  \lambda \leq  \lambda_{1}(\hn)$, defined in \eqref{potential} below and exploited in the proof of Theorem \ref{improved-hardy-radial}. Indeed, they satisfy condition \eqref{extra_condition_2} and, in this case, the function $\tilde W$ reads
$$ \tilde W_{\lambda}(r)=\lambda+h_N^2(\lambda) \frac{1}{r^2}+ \left(\left(\frac{N}{2} \right)^2-h_N^2(\lambda)\right) \frac{1}{\sinh^2 r}+ \frac{\gamma_{N}(\lambda)\,h_N(\lambda)}{r} \bigg(\coth r - \frac{1}{r}\bigg)
$$
which is positive in $(0,+\infty)$ for all $0\leq  \lambda \leq  \lambda_{1}(\hn)$. In particular, with this pair, taking $\lambda=0$ for simplicity, Theorem \ref{rellich_usp} yields
\begin{equation}\label{eqHPWhn3}
		\bigg(\int_{\hn}|\Delta_{\hn} u|^2\dv\bigg)\bigg(\int_{\hn}\frac{|\nabla_{\hn}u|^2}{  \frac{N^2}{4} \frac{1}{r^2}+   \frac{N(N-1)}{2r}(\coth r - \frac{1}{r})}\dv\bigg)\geq \bigg(\int_{\hn}|\nabla_{\hn}u|^2\dv\bigg)^2\,,
	\end{equation}
	for all $u\in C_c^{\infty}(\hn\setminus\{x_0\})$. The above inequality turns out to be more stringent than \eqref{eqHPWhn2} thereby confirming the conjecture that $\frac{N^2}{4}$ is not the sharp constant in \eqref{eqHPWhn2}.
\end{remark}


\section{Proofs of Theorems \ref{improved-hardy-radial}, \ref{improved-hardy}, \ref{HPWhn} and Corollaries \ref{2},\ref{4}}\label{applications}
{\bf  Proofs of Theorems \ref{improved-hardy-radial} and \ref{improved-hardy}}. The proof follows, respectively, by applying Corollary \ref{rop_th_Bessel_rellich} and Theorem \ref{th_Bessel_rellich_non_rad} with the family of Bessel pairs $(r^{N-1},r^{N-1}W_{\lambda})$ with $0\leq  \lambda \leq  \lambda_{1}(\hn)$ and
\begin{align}\label{potential}
W_{\lambda}(r)&:=\lambda+h_N^2(\lambda) \frac{1}{r^2}+ \left(\left(\frac{N-2}{2} \right)^2-h_N^2(\lambda)\right)   \frac{1}{\sinh^2 r}\notag \\&+ \left( \frac{\gamma_{N}(\lambda)\,h_N(\lambda)}{r}+(N-1) \frac{\Psi_{\lambda}'(r)}{\Psi_{\lambda}(r)}\right) \bigg(\coth r - \frac{1}{r}\bigg) \qquad (r>0)\,,
\end{align}
where $ \gamma_{N}(\lambda)$ and $h_N(\lambda)$ are as defined in the statement of Theorem \ref{th_Bessel_rellich} and
$$
\Psi_{\lambda}(r) := r^{-\frac{N-2}{2}} \left(\frac{\sinh r}{r} \right)^{-\frac{N-1+\gamma_N(\lambda)}{2}}\qquad (r>0)\,.
$$
In particular, by noticing that
$$
\Psi'_{\lambda}(r) = \Psi_{\lambda}(r)\left[ \frac{h_N(\lambda)}{r}+\frac{1-N-\gamma_N(\lambda)}{2} \coth r \right]\,,
$$
\begin{align*}
\Psi''_{\lambda}(r) &= \Psi_{\lambda}(r)\Big[ \frac{(1-N-\gamma_N(\lambda))^2}{4}+\frac{\gamma_N^2(\lambda)-1}{r^2}\\
&-\frac{(1-N-\gamma_N(\lambda))(1+N+\gamma_N(\lambda))}{4 \sinh^2 r} + \frac{(1-N-\gamma_N(\lambda)) h_N(\lambda) \coth r}{r} \Big]
\end{align*}
and recalling the definition of $\gamma_N(\lambda)$, it follows that $\Psi_{\lambda}(r)$ satisfies
$$(r^{N-1} \Psi_{\lambda}'(r))'+r^{N-1}W_{\lambda}(r) \Psi_{\lambda}(r)=0 \quad \text{for } r>0\,,$$
 namely $(r^{N-1},r^{N-1}W_{\lambda})$ is a Bessel pair with positive solution $\Psi_{\lambda}(r)$. See also \cite[Lemma 6.2]{BGGP} where the functions $\Psi_{\lambda}$ were originally introduced but exploited with different purposes.
Finally, from Corollary \ref{rop_th_Bessel_rellich} we deduce that, for all function $u\in C_c^\infty(B_R\setminus\{x_0\})$, there holds
	\begin{align*}
		\int_{B_R}|\Delta_{\hn} u|^2\dv&= \int_{B_R}W_{\lambda}(r)|\nabla_{\hn} u|^2\dv  \notag \\ 
		&+(N-1)\int_{B_R}\bigg(\frac{1}{\sinh^2 r}\bigg)|\nabla_{\hn} u|^2\dv \notag \\
		&-(N-1)\int_{B_R} \frac{\Psi_{\lambda}'(r)}{\Psi_{\lambda}(r)} \bigg(\coth r - \frac{1}{r}\bigg)|\nabla_{\hn} u|^2\dv \notag \\
		&+\int_{B_R}(\Psi_{\lambda}(r))^2\bigg|\nabla_{\hn}\bigg(\frac{u_r}{\Psi_{\lambda}(r)}\bigg)\bigg|^2\dv\,.
	\end{align*}
By this, recalling \eqref{potential}, the proof of Theorem \ref{improved-hardy-radial} follows. The proof of Theorem \ref{improved-hardy} works similarly by applying Theorem \ref{th_Bessel_rellich_non_rad} since condition \eqref{extra_condition_2} holds for the Bessel pair $(r^{N-1},r^{N-1}W_{\lambda})$ if $N\geq 5$.

As concerns the proof of the fact that the constants $h_N^2(\lambda)$ and $ \left[\left(\frac{N}{2} \right)^2 - h_N^2(\lambda)\right]$ are jointly sharp when $N\geq 5$, this follows by noticing that as $r\rightarrow 0$ we have 
 \begin{equation*}
	h_N^2(\lambda) \int_{\hn}\frac{|\nabla_{\hn} u|^2}{r^2}\dv+ \left[\left(\frac{N}{2} \right)^2 - h_N^2(\lambda)\right]\int_{\hn}\frac{|\nabla_{\hn} u|^2}{\sinh^2 r}\dv \sim \frac{N^2}{4}  \int_{\hn}\frac{|\nabla_{\hn} u|^2}{r^2}\dv\,.
	\end{equation*}
 Therefore, locally, we recover inequality \eqref{rellich-general} for $l=1$;  by this we readily infer that, for $h_N^2(\lambda)$ fixed, any larger constant in front of the term $\frac{|\nabla_{\hn} u|^2}{\sinh^2 r }$ would contradict the optimality of the constant $\frac{N^2}{4}$ in \eqref{rellich-general} (when $l=1$). \hfill $\Box$

\par
\bigskip
\par

{\bf Proof of  Corollary \ref{2}.}
The proof follows from Corollary \ref{1} by evaluating the term $\int_{\hn}\frac{|\nabla_{\hn} u|^2}{r^2}  \dv$ with the aid of \cite[Corollary 3.2]{FLLM} from which we know that
\begin{align*}
\int_{\hn}\frac{|\nabla_{\hn}u|^2}{r^2} \dv &= \bigg(\frac{N-4}{2}\bigg)^{2} \int_{\hn} \frac{u^2}{r^4} \dv \\
 & +  \frac{(N-4)(N-1)}{2}\int_{\hn} \frac{r \coth r - 1}{r^4}\, u^2   \dv\\
 & + \int_{\hn} \frac{1}{r^{N-2} }\bigg|\nabla_{\hn}\bigg(r^{\frac{N-4}{2}} u \bigg)\bigg|^2\dv\,.
\end{align*}
for all $u \in C_{c}^{\infty} (\hn \setminus \{ x_0 \})$. The proof for radial operators follows similarly since the above identity holds with the same constants for radial operators too. \hfill $\Box$

\par
\bigskip
\par

{\bf Proof of  Corollary \ref{4}.}
Here the proof follows by combining Corollary \ref{3} with \cite[Theorem 1.4]{FLLM} according to which we know that
\begin{align*}
&\int_{\hn} |\nabla_{\hn} u|^2 \dv=  \left(\frac{N-1}{2} \right)^2 \int_{\hn} u^2 \dv \notag \\
&+ \frac{1}{4} \int_{\hn} \frac{u^2}{r^2} \dv + \frac{(N-1)(N-3)}{4} \int_{\hn} \frac{u^2}{\sinh^2 r}  \dv\\
&+\int_{\hn}\frac{r}{(\sinh r)^{N-1}}\bigg|\nabla_{\hn}\bigg(\frac{(\sinh r)^{\frac{N-1}{2}}u_r}{r^{\frac{1}{2}}}\bigg)\bigg|^2\dv\,.
\end{align*}
for all $u \in C_{c}^{\infty} (\hn \setminus \{ x_0 \})$ and similarly for radial operators since the above identity holds with the same constants for radial operators too. \hfill $\Box$

\par
\bigskip
\par

{\bf Proof of Theorem \ref{HPWhn}.}
The proof is a simple application of Cauchy-Schwartz inequality combined with Theorem~\ref{improved-hardy}:
\begin{align*}
\int_{\hn} |\nabla_{\hn} u|^2 \, {\rm d}v_{\hn} &= \int_{\hn} r |\nabla_{\hn}u| \frac{|\nabla_{\hn}u|}{r} \, {\rm d}v_{\hn} \\
& \leq \left( \int_{\hn} r^2 |\nabla_{\hn} u|^2 \, {\rm d}v_{\hn} \right)^{\frac{1}{2}} \underbrace{\left( \int_{\hn} \frac{|\nabla_{\hn} u|^2}{r^2} \, {\rm d}v_{\hn}\right)^{\frac{1}{2}}}_{Using \ Theorem~\ref{improved-hardy}} \\
& \leq \frac{1}{h_{N} (\lambda)} \left( \int_{\hn} \left( |\Delta_{\hn} u|^2 - \lambda |\nabla_{\hn} u|^2 \right) {\rm d}v_{\hn} \right)^{\frac{1}{2}}\left( \int_{\hn} r^2 |\nabla_{\hn} u|^2  {\rm d}v_{\hn} \right)^{\frac{1}{2}}.
\end{align*}
 \hfill $\Box$

\section{Proofs of Theorem~\ref{th_Bessel_rellich}, Corollary \ref{rop_th_Bessel_rellich}, Theorem~\ref{th_Bessel_rellich_non_rad} and Theorem \ref{rellich_usp}}\label{bessel proofs}

We shall begin with the proof of Theorem~\ref{th_Bessel_rellich}. 

\medskip 

{\bf Proof of Theorem~\ref{th_Bessel_rellich}.}

	Let $u\in C_{c}^{\infty}(B_R\setminus\{x_0\})$ be a radial function, in terms of polar coordinates we have
	\begin{align*}
		&\int_{B_R}V(r)|\Delta_{\hn} u|^2\dv=N\omega_N\bigg[\int_{0}^{R}V(r)u_{rr}^2(\sinh r)^{N-1}\dr\\
		& +(N-1)^2\int_{0}^{R}V(r)(\coth r)^2u_r^2(\sinh r)^{N-1}\dr\\
		& +2(N-1)\int_{0}^{R}V(r)u_{rr}u_{r}(\coth r)(\sinh r)^{N-1}\dr \bigg].
	\end{align*}
	
	Now, applying integration by parts in the last term and setting $\nu=u_r$, we deduce
	{\begin{align}\label{nu}
		&\int_{B_R}V(r)|\Delta_{\hn} u|^2\dv=\int_{B_R}V(r)|\nabla_{\hn} \nu|^2\dv \notag\\
		&+(N-1)\int_{B_R}\bigg(\frac{V(r)}{\sinh^2 r}-\frac{V_{r}(r)\cosh r}{\sinh r}\bigg)|\nu|^2\dv.
	\end{align}
	
	On the other hand, from Lemma~\ref{th_Bessel_hardy} for the function $\nu$ we have
	\begin{align*}
		\int_{B_R}V(r)|\nabla_{\hn} \nu|^2\dv&= \int_{B_R}W(r)|\nu|^2\dv+\int_{B_R}V(r)(f(r))^2\bigg|\nabla_{\hn}\bigg(\frac{\nu}{f(r)}\bigg)\bigg|^2\dv\\&-(N-1)\int_{B_R}V(r)\frac{f^\prime}{f}\bigg(\coth r - \frac{1}{r}\bigg)|\nu|^2\dv.
	\end{align*}
	
	By using this identity into \eqref{nu} and writing back in terms of $u$ we deduce \eqref{eq_Bessel_rellich}.
\hfill $\Box$

\medskip 

{\bf Spherical harmonics.}

Before going to prove Corollary \ref{rop_th_Bessel_rellich} and Theorem~\ref{th_Bessel_rellich_non_rad}, we shall mention some useful facts from spherical harmonics. 

\medskip 

Let $u(x)=u(r,\Theta)\in C_{c}^\infty(\hn)$, $r\in({0},\infty)$ and $\Theta\in \mathbb{S}^{N-1}$, we can write
\begin{equation}\label{decomp}
	u(r,\Theta)=\sum_{n=0}^{\infty}a_n(r)P_n(\Theta)
\end{equation}
in $L^2(\hn)$, where $\{ P_n \}$ is an orthonormal system of spherical harmonics and 
\begin{equation*}
	a_n(r)=\int_{\mathbb{S}^{N-1}}u(r,\Theta)P_n(\Theta) \ {\rm d}\Theta\,.
\end{equation*} 
A spherical harmonic $P_n$ of order $n$ is the restriction to $\mathbb{S}^{N-1}$ of a homogeneous harmonic polynomial of degree $n.$ Moreover, it
satisfies $$-\Delta_{\mathbb{S}^{N-1}}P_n=\lambda_n P_n$$
for all $n\in\mathbb{N}\cup\{0\}$, where $\lambda_n=(n^2+(N-2)n)$ are the eigenvalues of Laplace Beltrami operator $-\Delta_{\mathbb{S}^{N-1}}$ on $\mathbb{S}^{N-1}$ with corresponding eigenspace dimension $c_n$. We note that $\lambda_n\geq N-1$ for $n\geq 1$, $\lambda_0=0$, $c_0=1$, $c_1=N$ and for $n\geq 2$ 
\begin{equation*}
	c_n=\binom{N+n-1}{n}-\binom{N+n-3}{n-2}.
\end{equation*}

\medspace

In a continuation let us also describe the Gradient and Laplace Beltrami operator in this setting. Now onward, to shorten the notations, we will always assume $\psi(r)=\sinh r.$  They will look like as follows:
\begin{align*}
|\nabla_{\hn}u|^2 =  \sum_{n = 0}^{\infty} {a_{n}^{\prime}}^2 P_{n}^2  + \frac{a_{n}^2}{\psi^2}  |\nabla_{\mathbb{S}^{N-1}}P_{n}|^2
\end{align*}
and
\begin{align}\label{sphr_lap}
	(\Delta_{\hn} u)^2 & = \sum_{n = 0}^{\infty} \left( a_{n}^{\prime \prime} + (N-1) \frac{\psi^{\prime}}{\psi} a_{n}^{\prime} \right)^2 P_{n}^2 + 
	\sum_{n = 0}^{\infty} \frac{a_{n}^2}{\psi^4} (\Delta_{\mathbb{S}^{N-1}} P_{n})^2 \\
	& + 2 \sum_{n = 0}^{\infty} \left( a_{n}^{\prime \prime} + (N-1) \frac{\psi^{\prime}}{\psi} a_{n}^{\prime} \right) \frac{a_{n}}{\psi^2} (\Delta_{\mathbb{S}^{N-1}} P_{n}) P_{n}.\notag
\end{align}

Along with this the radial contribution of the operators will look like as follows:
\begin{align*}
	|\nabla_{r,\hn}u|^2 =  \sum_{n = 0}^{\infty} {a_{n}^{\prime}}^2 P_{n}^2	
\end{align*}
and
\begin{align*}
	(\Delta_{r,\hn} u)^2 & = \sum_{n = 0}^{\infty} \left( a_{n}^{\prime \prime} + (N-1) \frac{\psi^{\prime}}{\psi} a_{n}^{\prime} \right)^2 P_{n}^2\,.
\end{align*}

\par \bigskip \par

{\bf Proof of Corollary \ref{rop_th_Bessel_rellich}.}
By spherical harmonics, we decompose $u$ as in  \eqref{decomp}. Now, exploiting Theorem \ref{th_Bessel_rellich} for each $a_n$, we deduce
	\begin{align*}
		&\int_{\hn}V(r)|\Delta_{r,\hn} u|^2\dv =\sum_{n = 0}^{\infty} \int_{0}^{\infty} V(r) \left( a_{n}^{\prime \prime} + (N-1) \frac{\psi^{\prime}}{\psi} a_{n}^{\prime} \right)^2 \psi^{N-1}\dr \\
		&=\sum_{n = 0}^{\infty} \bigg[\int_{0}^{\infty} W{a_n^{\prime}}^2\psi^{N-1}\dr+\int_{0}^{\infty}Vf^2\left[\left(\frac{a_n'}{f}\right)'\right]^2  \psi^{N-1}\dr\\&-(N-1)\int_{0}^{\infty}V\frac{f^\prime}{f}\bigg(\coth r - \frac{1}{r}\bigg)(a_n^{\prime})^2\psi^{N-1}\dr\\&+(N-1)\int_{0}^{\infty}V{a_n^{\prime}}^2\psi^{N-3}\dr-(N-1)\int_{0}^{\infty}V_r\psi^{\prime}(a_n')^2\psi^{N-2}\dr\bigg]\\
		&= \int_{\hn}W(r)|\nabla_{r,\hn} u|^2\dv +\int_{\hn}V(r)(f(r))^2\bigg|\nabla_{r,\hn}\bigg(\frac{u_r}{f(r)}\bigg)\bigg|^2\dv \\ 
		&-(N-1)\int_{\hn}V(r)\frac{f^\prime}{f}\bigg(\coth r - \frac{1}{r}\bigg)|\nabla_{r,\hn} u|^2\dv \\
		&+(N-1)\int_{\hn}\bigg(\frac{V(r)}{\sinh^2 r}-\frac{V_{r}(r)\cosh r}{\sinh r}\bigg)|\nabla_{r,\hn} u|^2\dv \,.
	\end{align*}
This completes the proof. \hfill $\Box$

\par \bigskip \par

{\bf Proof of Theorem~\ref{th_Bessel_rellich_non_rad}.}

	Again, by spherical decomposition we can write $u$ as in  \eqref{decomp}. Granting the advantage of $\psi(r)=\sinh r$, we can write some relations like $\frac{{\psi^{\prime}}^2}{\psi^2}=1+\frac{1}{\psi^2}$ and ${\psi^{\prime}}^2=1+\psi^2$ and we will use these identities in the proof frequently. Now, using \eqref{sphr_lap} for the decomposed function $u$, we get
	\begin{align*}
		&\int_{\hn}V(r)|\Delta_{\hn} u|^2\dv =\sum_{n = 0}^{\infty}\bigg[\int_{0}^{\infty} V(r) \left( a_{n}^{\prime \prime} + (N-1) \frac{\psi^{\prime}}{\psi} a_{n}^{\prime} \right)^2 \psi^{N-1}\dr\\&+ \lambda_n^2\int_{0}^{\infty}V(r)\frac{a_{n}^2}{\psi^4}\psi^{N-1}\dr- 2\; \lambda_{n}  \int_{0}^{\infty} V(r) \left( a_{n}^{\prime \prime} +
		(N-1) \frac{\psi^{\prime}}{\psi} a_{n}^{\prime} \right) \frac{a_{n}}{\psi^2} \psi^{N-1} \ {\rm d}r\bigg].\notag
	\end{align*}
	Exploiting Corollary \ref{rop_th_Bessel_rellich} for each $a_n$, we deduce 
	\begin{align*}
		&\int_{\hn}V(r)|\Delta_{\hn} a_n|^2\dv=N\omega_N\int_{0}^{\infty} V(r) \left( a_{n}^{\prime \prime} + (N-1) \frac{\psi^{\prime}}{\psi} a_{n}^{\prime} \right)^2 \psi^{N-1}\dr \\&=N\omega_N\bigg[\int_{0}^{\infty} W{a_n^{\prime}}^2\psi^{N-1}\dr+\int_{0}^{\infty}Vf^2\left[\left(\frac{a_n'}{f}\right)'\right]^2  \psi^{N-1}\dr\\&-(N-1)\int_{0}^{\infty}V\frac{f^\prime}{f}\bigg(\coth r - \frac{1}{r}\bigg)(a_n^{\prime})^2\psi^{N-1}\dr\\&+(N-1)\int_{0}^{\infty}V{a_n^{\prime}}^2\psi^{N-3}\dr-(N-1)\int_{0}^{\infty}V_r\psi^{\prime}(a_n')^2\psi^{N-2}\dr\bigg].
	\end{align*}
	On the other hand, the r.h.s of inequality \eqref{eq_nrad_rellich} in terms of spherical decomposition writes
	\begin{align*}
		&\int_{\hn}W(r)|\nabla_{\hn} u|^2\dv+\int_{\hn}V(r)(f(r))^2\bigg|\nabla_{\hn}\bigg(\frac{u_r}{f(r)}\bigg)\bigg|^2\dv\\&-(N-1)\int_{\hn}V(r)\frac{f^\prime}{f}\bigg(\coth r - \frac{1}{r}\bigg)|\nabla_{\hn} u|^2\dv\notag \\
		& +(N-1)\int_{\hn}\bigg(\frac{V(r)}{\sinh^2 r}-\frac{V_{r}(r)\cosh r}{\sinh r}\bigg)|\nabla_{\hn} u|^2\dv\\& =\sum_{n = 0}^{\infty}\bigg[\int_{0}^{\infty}W{a_n^\prime}^2\psi^{N-1}\dr+\lambda_{n}\int_{0}^{\infty}Wa_n^2\psi^{N-3}\dr+\int_{0}^{\infty}Vf^2\left[\left(\frac{a_n'}{f}\right)'\right]^2\psi^{N-1}\dr\\
		&+\lambda_{n}\int_{0}^{\infty}V{a_n^\prime}^2\psi^{N-3}\dr-(N-1)\int_{0}^{\infty}V\frac{f^\prime}{f}\bigg(\coth r - \frac{1}{r}\bigg){a_n^{\prime}}^2\psi^{N-1}\dr\\
		&-(N-1)\lambda_{n}\int_{0}^{\infty}V\frac{f^\prime}{f}\bigg(\coth r - \frac{1}{r}\bigg)a_n^2\psi^{N-3}\dr\\
		&+(N-1)\int_{0}^{\infty}\bigg(\frac{V(r)}{\psi^2}-\frac{\psi^{\prime}}{\psi}V_{r}(r)\bigg)(a_{n}')^2\psi^{N-1}\dr\\
		&+(N-1)\lambda_{n}\int_{0}^{\infty}\bigg(\frac{V(r)}{\psi^2}-\frac{\psi^{\prime}}{\psi}V_{r}(r)\bigg)\frac{a_{n}^2}{\psi^2}\psi^{N-1}\dr\bigg]\,.
	\end{align*} 
	Therefore, we will be done if we prove that the following quantity $\mathcal{B}$ is non-negative: 
	\begin{align}\label{non_neg_2}
		\mathcal{B}&:= \sum_{n = 0}^{\infty}\bigg[\lambda_n^2\int_{0}^{\infty}V(r)\frac{a_{n}^2}{\psi^4}\psi^{N-1}\dr - 2\; \lambda_{n}  \int_{0}^{\infty}  V(r)\left( a_{n}^{\prime \prime} +
		(N-1) \frac{\psi^{\prime}}{\psi} a_{n}^{\prime} \right) \frac{a_{n}}{\psi^2} \psi^{N-1} \ {\rm d}r\\&-\lambda_{n}\int_{0}^{\infty}W(r)\frac{a_{n}^2}{\psi^2}\psi^{N-1}\dr-(N-1)\lambda_{n}\int_{0}^{\infty}\bigg(\frac{V(r)}{\psi^2}-\frac{\psi^{\prime}}{\psi}V_{r}(r)\bigg)\frac{a_{n}^2}{\psi^2}\psi^{N-1}\dr\notag\\&
		-\lambda_n\int_{0}^\infty V (a_n')^2\psi^{N-3}\dr+(N-1)\lambda_{n}\int_{0}^{\infty}V\frac{f^\prime}{f}\bigg(\coth r - \frac{1}{r}\bigg)a_n^2\psi^{N-3}\dr\notag\bigg].
	\end{align} 
	
	To show that $\mathcal{B}$ is non-negative, we establish some preliminary identities. Let $b_n(r):=\frac{a_n(r)}{\psi(r)}$, by Leibniz rule we have $a_n^{\prime}=b_{n}^{\prime}\psi+b_n\psi^{\prime}$. Using this and by parts formula, we obtain
	\begin{align}\label{nrad_rellich_2}
		\int_{0}^{\infty}V {a_{n}^{\prime}}^2\psi^{N-3}\dr&=\int_{0}^{\infty}V {b_{n}^{\prime}}^2\psi^{N-1}\dr-(N-3)\int_{0}^{\infty}V b_{n}^2\psi^{N-3}\dr\\&-\int_{0}^{\infty}V_rb_n^2\psi^{\prime}\psi^{N-2}\dr-(N-2)\int_{0}^{\infty}Vb_n^2\psi^{N-1}\dr.\notag
	\end{align}
	
	Then applying Lemma~\ref{th_Bessel_hardy} for $b_n$, we deduce
	\begin{align*}
		\int_{0}^{\infty}V {b_{n}^{\prime}}^2\psi^{N-1}\dr&= \int_{0}^{\infty}Wb_n^2\psi^{N-1}\dr+\int_{0}^\infty Vf^2 \left[\bigg(\frac{b_n}{f}\bigg)'\right]^2 \psi^{N-1}\dr\\&-(N-1)\int_{0}^{\infty}V\frac{f^\prime}{f}\bigg(\coth r - \frac{1}{r}\bigg)b_n^2\psi^{N-1}\dr.
	\end{align*}
	
	Using this estimate into \eqref{nrad_rellich_2} and writing $b_n$ in terms of $a_n$, we have
	\begin{align}\label{nrad_rellich_3}
		&\int_{0}^{\infty}V {a_{n}^{\prime}}^2\psi^{N-3}\dr=\int_{0}^{\infty}W a_{n}^2\psi^{N-3}\dr+\int_{0}^\infty Vf^2 \left[\bigg(\frac{a_n}{f\psi}\bigg)'\right]^2\psi^{N-1}\dr\\&-(N-1)\int_{0}^{\infty}V\frac{f^\prime}{f}\bigg(\coth r - \frac{1}{r}\bigg)a_n^2\psi^{N-3}\dr-(N-3)\int_{0}^{\infty}V a_{n}^2\psi^{N-5}\dr\notag\\&-\int_{0}^{\infty}V_ra_n^2\psi^{\prime}\psi^{N-4}\dr-(N-2)\int_{0}^{\infty}Va_n^2\psi^{N-3}\dr.\notag
	\end{align}
	
	\medspace
	
	Before proving $\mathcal{B}$ is non-negative, first exploiting by parts formula, we evaluate separately some terms. First there holds
	\begin{align}\label{nr_rellich_6}
		\int_{0}^{\infty} Va_n^{\prime\prime}a_n\psi^{N-3}\dr=& \frac{1}{2}\int_{0}^{\infty}V_{rr}a_n^2\psi^{N-3}\dr+\frac{(N-3)}{2}\int_{0}^{\infty}V_ra_n^2\psi^{\prime}\psi^{N-4}\dr\\&-\int_{0}^{\infty}V(a_n^{\prime})^2\psi^{N-3}\dr-(N-3)\int_{0}^{\infty}Va_n^{\prime}a_n\psi^{\prime}\psi^{N-4}\dr\notag
	\end{align}
	and then
	\begin{align}\label{nr_rellich_7}
		\int_{0}^{\infty}Va_n^{\prime}a_n\psi^{\prime}\psi^{N-4}\dr &=-\frac{1}{2}\int_{0}^{\infty}V_ra_n^2\psi^{\prime}\psi^{N-4}\dr\\& - \frac{(N-4)}{2}\int_{0}^{\infty}Va_n^2\psi^{N-5}\dr-\frac{(N-3)}{2}\int_{0}^{\infty}Va_n^2\psi^{N-3}\dr.\notag
	\end{align}
	
	\medspace
	
	Next, using \eqref{nrad_rellich_3}, \eqref{nr_rellich_6} and \eqref{nr_rellich_7} into \eqref{non_neg_2} and after delicate calculations, we derive
	\begin{align*}
		& \mathcal{B}  = \sum_{n = 0}^{\infty}\bigg[\lambda_n^2\int_{0}^{\infty}Va_n^2\psi^{N-5}\dr-2\lambda_{n}\int_{0}^{\infty} Va_n^{\prime\prime}a_n\psi^{N-3}\dr\\
		&
		- 2(N-1)\lambda_{n}\int_{0}^{\infty} Va_n^{\prime}a_n\psi^{\prime}\psi^{N-4}\dr-\lambda_{n}\int_{0}^{\infty}Wa_n^2\psi^{N-3}\dr\\
		&-(N-1)\lambda_{n}\int_{0}^{\infty}Va_n^2\psi^{N-5}\dr+(N-1)\lambda_{n}\int_{0}^{\infty}V_ra_n^2\psi^{\prime}\psi^{N-4}\dr\\
		&-\lambda_n\int_{0}^\infty V (a_n')^2\psi^{N-3}\dr+(N-1)\lambda_{n}\int_{0}^{\infty}V\frac{f^\prime}{f}\bigg(\coth r - \frac{1}{r}\bigg)a_n^2\psi^{N-3}\dr\bigg]\\
		& =\sum_{n = 0}^{\infty}\bigg[\lambda_n^2\int_{0}^{\infty}Va_n^2\psi^{N-5}\dr-2\lambda_{n}\biggl\{ \frac{1}{2}\int_{0}^{\infty}V_{rr}a_n^2\psi^{N-3}\dr \\
		& +\frac{(N-3)}{2}\int_{0}^{\infty}V_ra_n^2\psi^{\prime}\psi^{N-4}\dr-\int_{0}^{\infty}V(a_n^{\prime})^2\psi^{N-3}\dr\\
		&-(N-3)\int_{0}^{\infty}Va_n^{\prime}a_n\psi^{\prime}\psi^{N-4}\dr\biggr\}-2(N-1)\lambda_{n}\int_{0}^{\infty}Va_n^{\prime}a_n\psi^{\prime}\psi^{N-4}\dr\\
		&-\lambda_{n}\int_{0}^{\infty}Wa_n^2\psi^{N-3}\dr-(N-1)\lambda_{n}\int_{0}^{\infty}Va_n^2\psi^{N-5}\dr\\
		&+(N-1)\lambda_{n}\int_{0}^{\infty}V_ra_n^2\psi^{\prime}\psi^{N-4}\dr-\lambda_n\int_{0}^\infty V (a_n')^2\psi^{N-3}\dr\\
		&+(N-1)\lambda_{n}\int_{0}^{\infty}V\frac{f^\prime}{f}\bigg(\coth r - \frac{1}{r}\bigg)a_n^2\psi^{N-3}\dr\bigg]\\
		&=\sum_{n = 0}^{\infty}\bigg[\lambda_n^2\int_{0}^{\infty}Va_n^2\psi^{N-5}\dr-2\lambda_{n}\biggl\{ \frac{1}{2}\int_{0}^{\infty}V_{rr}a_n^2\psi^{N-3}\dr-\int_{0}^{\infty}V{a_n^{\prime}}^2\psi^{N-3}\dr\biggr\}\\
		&-4\lambda_{n}\biggl\{-\frac{1}{2}\int_{0}^{\infty}V_ra_n^2\psi^{\prime}\psi^{N-4}\dr- \frac{(N-4)}{2}\int_{0}^{\infty}Va_n^2\psi^{N-5}\dr\\
		&-\frac{(N-3)}{2}\int_{0}^{\infty}Va_n^2\psi^{N-3}\dr\biggr\}-\lambda_{n}\int_{0}^{\infty}Wa_n^2\psi^{N-3}\dr\\
		&-(N-1)\lambda_{n}\int_{0}^{\infty}Va_n^2\psi^{N-5}\dr+2\lambda_{n}\int_{0}^{\infty}V_ra_n^2\psi^{\prime}\psi^{N-4}\dr\\
		&-\lambda_n\int_{0}^\infty V (a_n')^2\psi^{N-3}\dr+(N-1)\lambda_{n}\int_{0}^{\infty}V\frac{f^\prime}{f}\bigg(\coth r - \frac{1}{r}\bigg)a_n^2\psi^{N-3}\dr\bigg]\\
		&=\sum_{n = 0}^{\infty}\bigg[\lambda_{n}(\lambda_{n}+N-7)\int_{0}^{\infty}Va_n^2\psi^{N-5}\dr-\lambda_{n}\int_{0}^{\infty}V_{rr}a_n^2\psi^{N-3}\dr\\
		&+2\lambda_{n}\int_{0}^{\infty}V{a_n^{\prime}}^2\psi^{N-3}\dr+4\lambda_{n}\int_{0}^{\infty}V_ra_n^2\psi^{\prime}\psi^{N-4}\dr\\
		&+2\lambda_{n}(N-3)\int_{0}^{\infty}Va_n^2\psi^{N-3}\dr-\lambda_{n}\int_{0}^{\infty}Wa_n^2\psi^{N-3}\dr\\
		&-\lambda_n\int_{0}^\infty V (a_n')^2\psi^{N-3}\dr+(N-1)\lambda_{n}\int_{0}^{\infty}V\frac{f^\prime}{f}\bigg(\coth r - \frac{1}{r}\bigg)a_n^2\psi^{N-3}\dr\bigg]\\
		&= \sum_{n = 0}^{\infty}\bigg[\lambda_{n}(\lambda_{n}+N-7)\int_{0}^{\infty}Va_n^2\psi^{N-5}\dr-\lambda_{n}\int_{0}^{\infty}V_{rr}a_n^2\psi^{N-3}\dr\\
		&+4\lambda_{n}\int_{0}^{\infty}V_ra_n^2\psi^{\prime}\psi^{N-4}\dr+\lambda_{n}\biggl\{\int_{0}^{\infty}W a_{n}^2\psi^{N-3}\dr\\
		&+\int_{0}^\infty Vf^2\left[\bigg(\frac{a_n}{f\psi}\bigg)'\right]^2\psi^{N-1}\dr-(N-3)\int_{0}^{\infty}V a_{n}^2\psi^{N-5}\dr\\
		&-\int_{0}^{\infty}V_ra_n^2\psi^{\prime}\psi^{N-4}\dr-(N-2)\int_{0}^{\infty}Va_n^2\psi^{N-3}\dr\biggr\}\\
		&+2\lambda_{n}(N-3)\int_{0}^{\infty}Va_n^2\psi^{N-3}\dr-\lambda_{n}\int_{0}^{\infty}Wa_n^2\psi^{N-3}\dr\bigg]\\
		&=\sum_{n = 0}^{\infty}\bigg[\lambda_{n}(\lambda_{n}-4)\int_{0}^{\infty}Va_n^2\psi^{N-5}\dr-\lambda_{n}\int_{0}^{\infty}V_{rr}a_n^2\psi^{N-3}\dr\\
		&+3\lambda_{n}\int_{0}^{\infty}V_ra_n^2\psi^{\prime}\psi^{N-4}\dr+\lambda_{n}(N-4)\int_{0}^{\infty}Va_n^2\psi^{N-3}\dr\\
		&+\lambda_{n}\int_{0}^\infty Vf^2\left[\bigg(\frac{a_n}{f\psi}\bigg)'\right]^2\psi^{N-1}\dr\bigg]\\
		& =\sum_{n = 0}^{\infty}\lambda_{n}\bigg[(\lambda_{n}-4)\int_{0}^{\infty}Va_n^2\psi^{N-5}\dr+\int_{0}^{\infty}\biggl\{\frac{3V_r\psi^{\prime}}{\psi}-V_{rr}\biggr\}a_n^2\psi^{N-3}\dr\\&+(N-4)\int_{0}^{\infty}Va_n^2\psi^{N-3}\dr+\int_{0}^\infty Vf^2\left[\bigg(\frac{a_n}{f\psi}\bigg)'\right]^2\psi^{N-1}\dr\bigg]\\
		&\geq \sum_{n = 0}^{\infty}\lambda_{n}\bigg[\int_{0}^{\infty}\biggl\{(N-5)\frac{V}{\psi^2}+3\frac{V_r\psi^{\prime}}{\psi}-V_{rr}+(N-4)V\biggr\}a_n^2\psi^{N-3}\dr\\
		&+\int_{0}^\infty Vf^2\left[\bigg(\frac{a_n}{f\psi}\bigg)'\right]^2\psi^{N-1}\dr\bigg].
	\end{align*}
	In the last line we have used $\lambda_{n}\geq N-1$ for all $n\geq 1$. Hence, $\mathcal{B}$ eventually turns out to be non-negative due to the hypothesis \eqref{extra_condition_2} and the non negativity of the last term.
\hfill $\Box$
 \medskip


{\bf Proof of Theorem \ref{rellich_usp}.}
The idea of the proof is similar to that of Theorem \ref{HPWhn}. First, exploiting the given conditions into Theorem \ref{th_Bessel_rellich_non_rad}, we deduce that for all $u\in C_c^\infty(\hn\setminus\{x_0\})$ there holds
	\begin{align*}
		\int_{\hn}V(r)|\Delta_{\hn} u|^2\dv\geq \int_{\hn}\tilde W(r)|\nabla_{\hn} u|^2\dv\,.
	\end{align*}
	Finally, we use H\"older inequality and the above to get:
	\begin{align*}
		\bigg(\int_{\hn} |\nabla_{\hn}u|^2\dv\bigg)^2&=\bigg(\int_{\hn}\sqrt{\tilde W(r)}|\nabla_{\hn}u|\frac{|\nabla_{\hn}u|}{\sqrt{\tilde W(r)}}\dv\bigg)^2\\&\leq  \bigg(\int_{\hn}\frac{|\nabla_{\hn}u|^2}{\tilde W(r)}\dv\bigg) \bigg(\int_{\hn}\tilde W(r)|\nabla_{\hn}u|^2\dv\bigg) \\&\leq \bigg(\int_{\hn}\frac{|\nabla_{\hn}u|^2}{\tilde W(r)}\dv\bigg)\bigg(\int_{\hn}V(r)|\Delta_{\hn} u|^2\dv\bigg)\,.
	\end{align*}
The proof of Heisenberg-Pauli-Weyl uncertainty principle involving radial part of the operator is in a similar line.
\hfill $\Box$


\section*{Appendix: a family of improved Hardy-Poincar\'e equalities}
In this appendix we present a family of improved Hardy-Poincar\'e equalities which follows as a corollary from \cite[Theorem 3.2]{FLLM}, i.e. Lemma \ref{th_Bessel_hardy} above, by exploiting the family of Bessel pairs $(r^{N-1}, r^{N-1}W_{\lambda})$ introduced in Section \ref{applications} for all $0 \leq \lambda \leq \lambda_{1}(\hn)$. If $\lambda=\lambda_1(\hn)$ the identity we got is already known from \cite[Theorem 3.2]{FLLM} while for $0\leq\lambda<\lambda_1(\hn)$ it is new and improves \eqref{improved-poinc-lambda}, i.e. \cite[Theorem 2.1]{BGGP}, with the presence of an exact remainder term. The precise statement of the result reads as follows:
\begin{theorem}\label{bessel2}
Let $N \geq 2.$ For all $0 \leq \lambda \leq \lambda_{1}(\hn) = \left(\frac{N-1}{2} \right)^2$  and for all $u \in C_c^{\infty}(\hn \setminus \{ x_0\})$ there holds 
\begin{align*}
&\int_{\hn} |\nabla_{\hn}u|^2 \, {\rm d}v_{\hn} = \lambda \int_{\hn} u^2 \, {\rm d}v_{\hn} + h^2_{N}(\lambda) \int_{\hn} \frac{u^2}{r^2} \, {\rm d}v_{\hn} \notag \\
& + \left[\frac{(N-2)^2}{4} - h^2_{N}(\lambda)  \right] \int_{\hn} \frac{u^2}{\sinh^2 r} \, {\rm d}v_{\hn}+ \gamma_N(\lambda) h_N(\lambda) \int_{\hn} \frac{r \coth r - 1}{r^2} \,u^2 \, {\rm d}v_{\hn}   \notag \\
&+ \int_{\hn}(\Psi_{\lambda}(r))^2\bigg|\nabla_{\hn}\bigg(\frac{u}{\Psi_{\lambda}(r)}\bigg)\bigg|^2\dv
\end{align*}
and for the radial operator we have 
\begin{align*}
&\int_{\hn}  |\nabla_{r,\hn}u|^2 \, {\rm d}v_{\hn} = \lambda \int_{\hn} u^2 \, {\rm d}v_{\hn} + h^2_{N}(\lambda) \int_{\hn} \frac{u^2}{r^2} \, {\rm d}v_{\hn} \notag \\
& + \left[\frac{(N-2)^2}{4} - h^2_{N}(\lambda)  \right] \int_{\hn} \frac{u^2}{\sinh^2 r} \, {\rm d}v_{\hn}+ \gamma_N(\lambda) h_N(\lambda) \int_{\hn} \frac{r \coth r - 1}{r^2} \,u^2 \, {\rm d}v_{\hn}   \notag \\
& + \int_{\hn}(\Psi_{\lambda}(r))^2\bigg|\nabla_{r,\hn}\bigg(\frac{u}{\Psi_{\lambda}(r)}\bigg)\bigg|^2\dv\,,
\end{align*}
where $\gamma_{N}(\lambda):=\sqrt{(N-1)^2-4\lambda}$, $h_N(\lambda):=\frac{\gamma_{N}(\lambda)+1}{2}$ and $\Psi_{\lambda}(r) := r^{-\frac{N-2}{2}} \left(\frac{\sinh r}{r} \right)^{-\frac{N-1+\gamma_N(\lambda)}{2}}$. 
\end{theorem}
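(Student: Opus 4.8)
The plan is to obtain Theorem~\ref{bessel2} as a direct corollary of Lemma~\ref{th_Bessel_hardy} (i.e.\ \cite[Theorem~3.2]{FLLM}), specialised to the family of Bessel pairs $(r^{N-1}, r^{N-1}W_{\lambda})$ defined in \eqref{potential}. The crucial structural fact has already been established in Section~\ref{applications}, in the course of the proof of Theorem~\ref{improved-hardy-radial}: using the explicit formulas for $\Psi_{\lambda}'(r)$ and $\Psi_{\lambda}''(r)$ together with the definition of $\gamma_N(\lambda)$, one checks that
$$(r^{N-1}\Psi_{\lambda}'(r))' + r^{N-1}W_{\lambda}(r)\,\Psi_{\lambda}(r) = 0 \qquad (r>0),$$
so that $(r^{N-1}, r^{N-1}W_{\lambda})$ is a Bessel pair on $(0,\infty)$ with positive solution $\Psi_{\lambda}$, positivity being immediate from the explicit product form of $\Psi_{\lambda}$. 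Nothing further of this type has to be proved.

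First I would fix $u \in C_c^{\infty}(\hn\setminus\{x_0\})$ and choose $R$ with $\mathrm{supp}\,u \subset B_R$, so that all integrals over $\hn$ below coincide with the corresponding integrals over $B_R$ (in particular they are finite). Applying Lemma~\ref{th_Bessel_hardy} with $V\equiv 1$, $W = W_{\lambda}$ and $f=\Psi_{\lambda}$ then gives immediately
\begin{align*}
\int_{\hn} |\nabla_{\hn}u|^2\dv &= \int_{\hn} W_{\lambda}(r)\,u^2\dv + \int_{\hn}(\Psi_{\lambda}(r))^2\Big|\nabla_{\hn}\Big(\tfrac{u}{\Psi_{\lambda}(r)}\Big)\Big|^2\dv \\
&\quad -(N-1)\int_{\hn}\frac{\Psi_{\lambda}'(r)}{\Psi_{\lambda}(r)}\Big(\coth r - \tfrac{1}{r}\Big)u^2\dv,
\end{align*}
together with the analogous identity with $\nabla_{\hn}$ replaced by $\nabla_{r,\hn}$, coming from the second identity in Lemma~\ref{th_Bessel_hardy}.

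It then only remains to insert the explicit expression \eqref{potential} for $W_{\lambda}$ into the right-hand side and simplify. The summand $(N-1)\tfrac{\Psi_{\lambda}'(r)}{\Psi_{\lambda}(r)}\big(\coth r-\tfrac1r\big)$ occurring inside $W_{\lambda}$ is cancelled exactly by the last integral above, and the remaining part of $W_{\lambda}$ contributes $\lambda\,u^2 + h_N^2(\lambda)\tfrac{u^2}{r^2} + \big(\tfrac{(N-2)^2}{4}-h_N^2(\lambda)\big)\tfrac{u^2}{\sinh^2 r} + \gamma_N(\lambda)h_N(\lambda)\,\tfrac1r\big(\coth r-\tfrac1r\big)u^2$. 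Since $\tfrac1r\big(\coth r - \tfrac1r\big) = \tfrac{r\coth r - 1}{r^2}$ and $\big(\tfrac{N-2}{2}\big)^2=\tfrac{(N-2)^2}{4}$, this is exactly the asserted identity, and the radial statement follows in the same way from the radial version of Lemma~\ref{th_Bessel_hardy}.

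I do not expect any genuine obstacle here: the only substantial computation, namely verifying the Bessel ODE for the pair $(\Psi_{\lambda},W_{\lambda})$, is already available from Section~\ref{applications}, and everything else is bookkeeping of the above cancellation together with the elementary identity for $\tfrac1r(\coth r-\tfrac1r)$. As a consistency check, for $\lambda=\lambda_1(\hn)$ (where $\gamma_N=0$) the identity collapses to the one in \cite[Theorem~3.2 and Theorem~1.4]{FLLM}; for $0\le\lambda<\lambda_1(\hn)$ it is new and refines \eqref{improved-poinc-lambda} by an exact remainder term.
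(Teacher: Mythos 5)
Your proposal is correct and follows exactly the paper's own route: the paper likewise obtains Theorem \ref{bessel2} by applying Lemma \ref{th_Bessel_hardy} with the Bessel pair $(r^{N-1}, r^{N-1}W_{\lambda})$ and positive solution $\Psi_{\lambda}$ constructed in Section \ref{applications}, the only computation being the cancellation of the $(N-1)\frac{\Psi_{\lambda}'}{\Psi_{\lambda}}\big(\coth r-\frac{1}{r}\big)$ term that you carry out explicitly. Your write-up simply makes explicit the bookkeeping the paper leaves implicit.
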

\begin{proof}
The proof follows by applying \cite[Theorem~3.2]{FLLM}, i.e. Lemma \ref{th_Bessel_hardy} above, with the Bessel pairs $(r^{N-1}, r^{N-1}W_{\lambda}),$ where $W_{\lambda}$ is as given in \eqref{potential}.
\end{proof}

In particular, for $\lambda = N-2$ Theorem \ref{bessel2} yields the Hardy identity below which improves \eqref{improved-hardy-eq}:

\begin{corollary}
Let $N \geq 3.$ For all $u \in C_{c}^{\infty} (\hn \setminus \{ x_0 \})$ there holds
\begin{align*}\label{appendiximproved-hardy-eq}
\int_{\hn} |\nabla_{\hn} u|^2 \ \emph{d}v_{\hn} & = \left(\frac{N-2}{2} \right)^2 \int_{\hn} \frac{u^2}{r^2} \ \emph{d}v_{\hn} +  (N-2) \int_{\hn} u^2 \ \emph{d}v_{\hn} \notag \\
& + \frac{(N-2)(N-3)}{2} \int_{\hn} \frac{r \coth r - 1}{r^2}  \, u^2 \ \emph{d}v_{\hn}\, \\
& + \int_{\hn} \left(\frac{r^{1/2}}{\sinh r}\right)^{2(N-2)} \left| \nabla_{\hn} \left( \frac{(\sinh r)^{N-2} u}{r^{(N-2)/2}} \right) \right|^2  \, {\rm d}v_{\hn}.
\end{align*}
\end{corollary}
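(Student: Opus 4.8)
The plan is to obtain this corollary as the special case $\lambda=N-2$ of the identity in Theorem \ref{bessel2}. First I would check that this choice is admissible: since $N\ge 3$ one has $0\le N-2$, and $N-2\le \left(\frac{N-1}{2}\right)^2=\lambda_1(\hn)$ because $\left(\frac{N-1}{2}\right)^2-(N-2)=\frac{(N-3)^2}{4}\ge 0$. Hence Theorem \ref{bessel2} applies with $\lambda=N-2$, and it only remains to evaluate the various $\lambda$-dependent quantities at this value and simplify.

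Next I would evaluate those quantities. From $\gamma_N(\lambda)=\sqrt{(N-1)^2-4\lambda}$ we get $\gamma_N(N-2)=\sqrt{(N-1)^2-4(N-2)}=\sqrt{(N-3)^2}=N-3$, the last equality using $N\ge 3$; hence $h_N(N-2)=\frac{\gamma_N(N-2)+1}{2}=\frac{N-2}{2}$ and $h_N^2(N-2)=\left(\frac{N-2}{2}\right)^2$. Consequently the coefficient $\frac{(N-2)^2}{4}-h_N^2(N-2)$ of the term $\int_{\hn}\frac{u^2}{\sinh^2 r}\dv$ vanishes, while $\gamma_N(N-2)\,h_N(N-2)=\frac{(N-2)(N-3)}{2}$ reproduces the coefficient of the $\frac{r\coth r-1}{r^2}$ term in the claim. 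It then remains to identify the weight $\Psi_\lambda$: since $\frac{N-1+\gamma_N(N-2)}{2}=\frac{N-1+(N-3)}{2}=N-2$, we obtain $\Psi_{N-2}(r)=r^{-\frac{N-2}{2}}\left(\frac{\sinh r}{r}\right)^{-(N-2)}=\frac{r^{(N-2)/2}}{(\sinh r)^{N-2}}$, so that $(\Psi_{N-2}(r))^2=\left(\frac{r^{1/2}}{\sinh r}\right)^{2(N-2)}$ and $\frac{u}{\Psi_{N-2}(r)}=\frac{(\sinh r)^{N-2}u}{r^{(N-2)/2}}$. Substituting all of this into the first identity of Theorem \ref{bessel2} produces exactly the displayed formula; dropping the manifestly nonnegative last term then recovers \eqref{improved-hardy-eq}, as claimed in the surrounding text.

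Since the argument is a direct substitution into an already established identity, there is no genuine obstacle. The only points requiring a little care are the simplification $\sqrt{(N-3)^2}=N-3$, which is precisely why the hypothesis $N\ge 3$ is imposed (for $N=2$ one would instead have $\gamma_N(0)=1$, altering the constants and sign of the $\frac{u^2}{\sinh^2 r}$ coefficient), and the routine algebra of rewriting the power of $\frac{\sinh r}{r}$ appearing in $\Psi_{N-2}$ so as to match the form $\left(\frac{r^{1/2}}{\sinh r}\right)^{2(N-2)}$ and $\frac{(\sinh r)^{N-2}u}{r^{(N-2)/2}}$ used in the statement.
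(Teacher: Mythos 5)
Your proposal is correct and coincides with the paper's own (implicit) proof: the corollary is obtained exactly by taking $\lambda=N-2$ in Theorem \ref{bessel2} and simplifying $\gamma_N(N-2)=N-3$, $h_N(N-2)=\frac{N-2}{2}$ (so the $\sinh^{-2}r$ coefficient vanishes) and $\Psi_{N-2}(r)=r^{(N-2)/2}(\sinh r)^{-(N-2)}$. Your verification that $N-2\le\lambda_1(\hn)$ and the identification of the remainder weight are exactly the computations needed, so there is nothing to add.
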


   \par\bigskip\noindent
\textbf{Acknowledgments.}
E.~Berchio is member of the Gruppo Nazionale per l'Analisi Matematica, la Probabilit\`a e le loro Applicazioni (GNAMPA) of the Istituto Nazionale di Alta Matematica (INdAM) and is partially supported by the PRIN project 201758MTR2: ``Direct and inverse problems for partial differential equations: theoretical aspects and applications'' (Italy).
D.~Ganguly is partially supported by the INSPIRE faculty fellowship (IFA17-MA98).  
P.~Roychowdhury is supported by the Council of Scientific \& Industrial Research (File no.  09/936(0182)/2017-EMR-I). Also this work is part of PhD program at Indian Institute of Science Education and Research, Pune. D.~Ganguly is grateful to S.~Mazumdar for useful discussions.

\end{document}